	\def\MR#1{}
\newcommand{\bfa}{\mathbf{a}}
\newcommand{\bfb}{\mathbf{b}}
\newcommand{\bfe}{\mathbf{e}}
\newcommand{\bfx}{\mathbf{x}}
\newcommand{\eb}{\mathbf{e}}
\newcommand{\xb}{\mathbf{x}}
\newcommand{\tb}{\mathbf{t}}
\newcommand{\strint}{\mathrm{int}}
\newcommand{\into}{\mathrm{int}}
\newcommand{\verto}{\mathrm{vert}}
\newcommand{\set}[1]{\left\{ #1 \right\}}
\newcommand{\setcond}[2]{\set{#1 \ \colon \ #2}}
\newcommand{\facets}{\mathcal{F}}
\def\opn#1#2{\def#1{\operatorname{#2}}} 
\opn\Cl{Cl} \opn\deg{deg} \opn\Stab{Stab} \opn\aff{aff} \opn\div{div}
\opn\cone{cone} \opn\End{End} \opn\mod{mod}  \opn\pdim{pdim} \opn\diag{diag} \opn\vert{vert} \opn\m{m}
\opn\Cone{Cone} \opn\Pyr{Pyr} \opn\max{max} \opn\min{min} \opn\int{int} \opn\rev{rev} \opn\ker{ker} \opn\lat{lat} \opn\pull{pull}
\opn\cok{cok} \opn\ant{ant}
\opn\inte{int}
\newcommand{\RR}{\mathbb{R}}
\newcommand{\kk}{\mathbb{k}}
\newcommand{\KK}{\mathbb{K}}
\newcommand{\NN}{\normalfont\mathbb{N}}
\newcommand{\ZZ}{\normalfont\mathbb{Z}}
\newcommand{\MM}{{\normalfont\mathfrak{m}}}
\newcommand{\mm}{{\normalfont\mathfrak{m}}}
\newcommand{\fkn}{{\normalfont\mathfrak{n}}}
\newcommand{\QQ}{\mathbb{Q}}
\newcommand{\pp}{{\normalfont\mathfrak{p}}}
\newcommand{\depth}{\normalfont\text{depth}}
\newcommand{\conv}{\normalfont\text{conv}}
\newcommand{\Supp}{\normalfont\text{Supp}}
\newcommand{\Hom}{\normalfont\text{Hom}}
\newcommand{\indeg}{\normalfont\text{indeg}}
\newcommand{\Hilb}{{\normalfont\text{Hilb}}}
\newcommand{\Spec}{\normalfont\text{Spec}}
\def\f0{\mathbf{0}}
\def\1{\mathbf{1}}
\newtheorem{theorem}{Theorem}[section]
\newaliascnt{headcor}{headthm}
\newaliascnt{headconj}{headthm}
\newaliascnt{corollary}{theorem}
\newtheorem{corollary}[corollary]{Corollary}
\newaliascnt{claim}{theorem}
\newaliascnt{lemma}{theorem}
\newtheorem{lemma}[lemma]{Lemma}
\newaliascnt{conjecture}{theorem}
\newaliascnt{proposition}{theorem}
\newtheorem{proposition}[proposition]{Proposition}
\theoremstyle{definition}
\newaliascnt{definition}{theorem}
\newtheorem{definition}[definition]{Definition}
\newaliascnt{notation}{theorem}
\newaliascnt{example}{theorem}
\newtheorem{example}[example]{Example}
\newaliascnt{examples}{theorem}
\newaliascnt{remark}{theorem}
\newtheorem{remark}[remark]{Remark}
\newaliascnt{question}{theorem}
\newtheorem{question}[question]{Question}
\newaliascnt{questions}{theorem}
\newaliascnt{problem}{theorem}
\newaliascnt{construction}{theorem}
\newaliascnt{setup}{theorem}
\newtheorem{setup}[setup]{Setup}
\newaliascnt{algorithm}{theorem}
\newaliascnt{observation}{theorem}
\newaliascnt{defprop}{theorem}
\def\equationautorefname~#1\null{(#1)\null}
\def\sectionautorefname~#1\null{Section #1\null}
\def\subsectionautorefname~#1\null{\S #1\null}
\def\opn#1#2{\def#1{\operatorname{#2}}}
\opn\Cl{Cl} \opn\deg{deg} \opn\Stab{Stab} \opn\aff{aff} \opn\div{div}
\opn\cone{cone} \opn\End{End} \opn\mod{mod}  \opn\pdim{pdim} \opn\diag{diag} \opn\vert{vert} \opn\m{m} \opn\V{V}
\opn\Cone{Cone} \opn\Pyr{Pyr} \opn\max{max} \opn\min{min} \opn\int{int} \opn\rev{rev} \opn\ker{ker} \opn\lat{lat} \opn\pull{pull}
\opn\cok{coker} \opn\ant{ant}
\opn\inte{int}
\opn\tr{tr}
\opn\rt{rt}
\title[A linear generalization of the nearly Gorenstein property]{A linear generalization of the nearly Gorenstein property,\\
with applications to Veronese subalgebras}
\author{Sora Miyashita}
\address[Miyashita]
{Department of Pure And Applied Mathematics, Graduate School Of Information Science And Technology, Osaka University, Suita, Osaka 565-0871, Japan}
\email{u804642k@ecs.osaka-u.ac.jp}
\date{\today}
\keywords{nearly Gorenstein, Gorenstein on the punctured spectrum, semi-standard graded rings, affine semigroup rings, Ehrhart rings, Veronese subalgebras.}
\subjclass[2020]{Primary 13H10, 05E40, 13A02; Secondary 14M25.}
\begin{document}

	\maketitle

\begin{abstract}
We studies the nearly Gorenstein property for Veronese subalgebras of (semi-)standard graded algebras.
We introduce a condition~$(\natural)$ for Cohen--Macaulay semi-standard graded rings, motivated by the study of Ehrhart rings.
We show that if a semi-standard graded algebra \( R \) satisfies~$(\natural)$, then its Veronese subalgebras \( R^{(k)} \) are nearly Gorenstein for all sufficiently large \( k \).
We also prove that if a standard graded algebra $R$ is nearly Gorenstein so does its Veronese subalgebra $R^{(k)}$ for all $k>0$.
\end{abstract}

\section{Introduction}
Throughout this introduction,
let $\kk$ be a field
and let \( R = \bigoplus_{n \ge 0} R_n \) be a Cohen--Macaulay graded ring with \( R_0 = \kk \).
Let \( \omega_R \) denote the canonical module of $R$
and let $\MM_R$ denote the graded maximal ideal of $R$.

In commutative algebra, the Gorenstein property plays a central role in the study of duality and the homological behavior of rings. To extend its influence to a broader range of rings, various generalizations have been developed. One such generalization is the notion of nearly Gorenstein rings, introduced by~\cite{herzog2019trace}.
The {\it canonical trace} \( \tr_R(\omega_R) \) is the sum of the images of all \( R \)-module homomorphisms from \( \omega_R \) to \( R \). The ring \( R \) is called \emph{nearly Gorenstein} if \( \tr_R(\omega_R) \supseteq \mm_R \).
It is known that \( R \) is {\it Gorenstein on the punctured spectrum} if and only if 
\( \sqrt{\operatorname{tr}_R(\omega_R)} \supseteq \mathfrak{m}_R \), where \( \sqrt{\phantom{a}} \) denotes the radical of an ideal.
The notions of nearly Gorenstein and being Gorenstein on the punctured spectrum have been studied for several prominent classes of rings, including generic determinantal rings (\cite{ficarra2024canonical!}), Hibi rings~(\cite[Theorem 3.4]{herzog2019measuring}), and Stanley--Reisner rings~(\cite{miyashita2024canonical}).
Moreover, for Ehrhart rings arising from lattice polytopes, an intermediate class between being Gorenstein on the punctured spectrum and being nearly Gorenstein has been studied.
To investigate the nearly Gorenstein property in this context, the \emph{floor polytope} $\lfloor P \rfloor$ and the \emph{remainder polytope} $\{P\}$ were introduced in~\cite{hall2023nearly} for a lattice polytope $P$ (we use a slightly modified definition of $\lfloor P \rfloor$).
When the identity \( P = \lfloor P \rfloor + \{ P \} \) holds, it gives rise to a polyhedral condition that lies strictly between the nearly Gorenstein property and the Gorenstein property on the punctured spectrum.
Here, the sum \( \lfloor P \rfloor + \{ P \} \) denotes the Minkowski sum of $\lfloor P \rfloor$ and $\{ P \}$.
It is known that for such a lattice polytope $P$, 
the Ehrhart ring $A(nP)$ becomes nearly Gorenstein for sufficiently large \( n \in \ZZ \)~(see \cite[Theorem 20]{hall2023nearly}).

The aim of this paper is to study a ring-theoretic property that generalizes the above property of Ehrhart rings, and to prove \cite[Theorem~20]{hall2023nearly} in a more general framework.
To set the stage for our approach, we begin by reviewing the notion of semi-standard graded rings.
We say that \( R \) is {\it standard graded} if it is generated as a \( \kk \)-algebra by \( R_1 \), and {\it semi-standard graded} if it is finitely generated as a \( \kk[R_1] \)-module.
For instance, Ehrhart rings are typically not standard graded, but they are always semi-standard graded. In addition to Ehrhart rings, face rings arising from simplicial posets~(\cite{stanley1991f})
are also known to be semi-standard graded.
Thus, several important classes of rings in combinatorial commutative algebra are typical examples of semi-standard graded rings.
We now introduce condition~\((\natural)\), which defines an intermediate class between nearly Gorenstein rings and those that are Gorenstein on the punctured spectrum.

\begin{definition}
Assume that $R$ is semi-standard graded.
We say that {\it \( R \) satisfies \((\natural)\)} if
$\sqrt{[\operatorname{tr}(\omega_R)]_1 R} \supseteq \mm_R,$
that is, the radical of the ideal generated by the degree-one part of the canonical trace contains \( \mm_R \).
\end{definition}

For Ehrhart rings $A(P)$ arising from lattice polytopes $P$, condition~$(\natural)$ is equivalent to the polyhedral condition $P=\lfloor P\rfloor+\{P\}$; that is, $A(P)$ satisfies $(\natural)$ if and only if $P=\lfloor P\rfloor+\{P\}$ (see~\autoref{thm:goodnews}).
It follows that condition~\((\natural)\) is not equivalent to the nearly Gorenstein property in general for Ehrhart rings
(see \cite[Example~19]{hall2023nearly}).
Moreover, we prove that condition $(\natural)$ implies a generalization of a result from \cite[Theorem 20]{hall2023nearly} in the setting of semi-standard graded domains, including Ehrhart rings as a special case~(see \autoref{THEMAINTHM}~(2)).

Throughout \autoref{THEMAINTHM} and \autoref{THEMAINTHM2}, we further assume that $R$ is semi-standard graded and that $\dim(R)>0$.  
In addition, for each integer \( k > 0 \), we denote the \( k \)-th Veronese subalgebra of \( R \) by \( R^{(k)} \).  
The next result is the main theorem of the paper.

\begin{theorem}[{\autoref{GREAT} and \autoref{cor:SUPERNICE}}]\label{THEMAINTHM}
The following hold:
\begin{itemize}
\item[\rm (1)] Suppose that \( R \) is standard graded.
If $R$ satisfies $(\natural)$,
then \( R^{(k)} \) is nearly Gorenstein for all integers $k > a_A$, where
\( a_{A} \) is the \( a \)-invariant of \(A:=R / \left[ \operatorname{tr}_R(\omega_R) \right]_1 R \);
\item[\rm (2)]
Suppose that \( R \) is a domain~(it need not be standard graded).
If $R$ satisfies $(\natural)$,
then there exists an integer \( k_R > 0 \) such that \( R^{(k)} \) is nearly Gorenstein for all \( k \ge k_R \).
\end{itemize}
\end{theorem}

Let $a_R$ denote the {\it $a$-invariant} of $R$.
Moreover, let \( \rt_{\kk[R_1]}(R) \) and \( \rt_{\kk[R_1]}(\omega_R) \) denote the maximal degrees of the minimal \( \kk[R_1] \)-generating set of \( R \) and \( \omega_R \), respectively.
While \autoref{THEMAINTHM} concerns condition~\((\natural)\), it yields the following result.

\begin{theorem}[{\autoref{cor:GREAT} and \autoref{prop:trace_stability}}]\label{THEMAINTHM2}
The following hold:
\begin{itemize}
\item[\rm (1)]
Suppose that $R$ is standard graded.
If $R$ is nearly Gorenstein, then so is \( R^{(k)} \) for all \( k > 0 \);
\item[\rm (2)]
Suppose that $R$ is a domain.
If $R^{(i)}$ is nearly Gorenstein for some $$i \ge \max\{1, \;|a_R|, \;\rt_{\kk[R_1]}(R), \;\rt_{\kk[R_1]}(\omega_R)\},$$
then so is $R^{(k)}$ for all $k>i$.
\end{itemize}
\end{theorem}

\autoref{THEMAINTHM2}~(1) follows from \autoref{THEMAINTHM}~(1) and generalizes \cite[Corollary~4.7]{herzog2019trace}.
In principle, the assumptions in \autoref{THEMAINTHM} cannot be weakened in general (see \autoref{rem:vero}).
It is still unclear whether the assumption that $R$ is a domain, which appears in \autoref{THEMAINTHM}~(2) and \autoref{THEMAINTHM2}~(2), is truly necessary.
However, we have verified that both results remain valid if this assumption is replaced by the condition that $R$ is level, i.e., that every minimal generating set of $\omega_R$ as an $R$-module consists of elements of the same degree.
This is shown in \autoref{cor:SUPERNICE}.


\subsection*{Outline}
The outline of this paper is as follows:  
In \autoref{sect_A}, we establish foundational concepts and definitions essential for the discussions that follow. We review basic notions and properties of commutative rings
 and affine semigroup rings.
In \autoref{sect_B}, we introduce a condition, denoted~\((\natural)\), for Cohen--Macaulay semi-standard graded rings and 
we prove \autoref{THEMAINTHM} and \autoref{THEMAINTHM2}, exploring the connection between condition~\((\natural)\) and Veronese subalgebras.
In \autoref{sect_D}, we give a criterion for affine semigroup rings to satisfy condition~$(\natural)$.


\begin{setup}\label{setup1}
Throughout this paper, we denote the set of non-negative integers by \( \mathbb{N} \) and let $\kk$ be a field.
Let \( R = \bigoplus_{i \in \NN} R_i \) be a positively graded Noetherian ring, and let 
\( \mathfrak{m}_R := \bigoplus_{i \in \NN} R_{i+1} \) denote its unique graded maximal ideal. 
We always assume that $R_0=\kk$.
In this setting, \( R \) has a graded canonical module \( \omega_R \) 
(see~\cite[Definition (2.1.2)]{goto1978graded}). Let \( a_R \) denote the \emph{\( a \)-invariant} of \( R \), defined by
$a_R := - \min \{ j \in \mathbb{Z} : (\omega_R)_j \neq 0 \}$.
For a graded $R$-module \( M \) and an integer \( i \in \mathbb{Z} \), the degree \( i \) component of \( M \) is often denoted by \( [M]_i \) instead of \( M_i \).
Let \( \mathcal{S} \subseteq R \) be the multiplicative set of homogeneous non-zero divisors. 
The \emph{graded total quotient ring} \( Q(R) \) is defined as the localization
$Q(R) := \mathcal{S}^{-1}R$.
Then \( Q(R) \) inherits a natural \( \mathbb{Z} \)-grading, with graded components given by
$[Q(R)]_i := \left\{ \frac{r}{s} \in Q(R) : r, s \in R\text{\;are homogeneous},\ \deg(r) - \deg(s) = i \right\}$ for $i \in \mathbb{Z}$.
\end{setup}

\section{Preliminaries and auxiliary lemmas}
\label{sect_A}

The goal of this section is to prepare the required materials for the discussions of our main results.
Throughout this section, unless otherwise noted, we maintain \autoref{setup1} and fix an integer $d>0$.

\subsection{Traces ideals and some generalizations of the Gorenstein property}
In this subsection, we recall some basic definitions and facts about trace ideals.
We also recall several standard generalizations of the Gorenstein property.
Note that the following \autoref{trace} does not assume \autoref{setup1}.
We define the trace ideal in this way to ensure consistency with the graded case.

\begin{definition}\label{trace}
Let $R$ be an arbitrary Noetherian commutative ring.
For an $R$-module $M$, the sum of all images of homomorphisms $\phi \in \Hom_R(M,R)$ is called the {\it trace} of $M$:
\[
\tr_R(M):=\sum_{\phi \in \Hom_R(M,R)}\phi(M).
\] 
\end{definition}

\begin{remark}\label{rem:interestingfiniteness}
For a finitely generated graded $R$-module $M$,
we have
$\Hom_R(M,R)={}^*\Hom_R(M,R)$
(see \cite[Exercises~1.5.19 (f)]{bruns1998cohen})
where ${}^*\Hom_R(M,R)$ denotes the set of graded homomorphisms from $M$ to $R$.
It follows that
$\tr_R(M)=\sum_{\phi \in {}^*\Hom_R(M,R)}\phi(M)$.
For any graded $R$-module $N$,
note that \( \operatorname{tr}_R(M) = \operatorname{tr}_R(N) \) if there exists an isomorphism $\phi \in \Hom^*_R(M,N)$.
\end{remark}

\begin{definition}
An \( \mathbb{N}^d \)-graded ring \( R \) is a ring endowed with a direct sum decomposition
$R = \bigoplus_{\mathbf{a} \in \mathbb{N}^d} R_{\mathbf{a}}$
such that \( R_{\mathbf{a}} \cdot R_{\mathbf{b}} \subseteq R_{\mathbf{a} + \mathbf{b}} \) for all \( \mathbf{a}, \mathbf{b} \in \mathbb{N}^d \).
A \( \mathbb{Z}^d \)-graded \( R \)-module \( M \) is an \( R \)-module together with a direct sum decomposition
$M = \bigoplus_{\mathbf{a} \in \mathbb{Z}^d} M_{\mathbf{a}}$
satisfying \( R_{\mathbf{b}} \cdot M_{\mathbf{a}} \subseteq M_{\mathbf{a} + \mathbf{b}} \) for all \( \mathbf{a} \in \mathbb{Z}^d \) and \( \mathbf{b} \in \mathbb{N}^d \).
\end{definition}

\begin{remark}\label{Mult}
Let \( R \) be an \( \mathbb{N}^d \)-graded ring and let \( M \) be a finitely generated \( \mathbb{Z}^d \)-graded \( R \)-module.
Then we have
$\operatorname{Hom}_R(M, R) = 
{}^*\operatorname{Hom}_R^{\ZZ^d}(M, R)$
where
${}^*\operatorname{Hom}_R^{\ZZ^d}(M, R)$
denotes the set of $\ZZ^d$-graded homomorphisms from $M$ to $R$.
It follows that $\operatorname{tr}_R(M) = \sum_{\varphi \in {}^*\operatorname{Hom}_R^{\ZZ^d}(M, R)} \varphi(M)$.
In particular,
\( \operatorname{tr}_R(M) \) is a homogeneous ideal of $R$ with respect to the \( \mathbb{N}^d \)-grading.
\end{remark}

\begin{definition}
Let $Q(R)$ denote the graded total quotient ring
(see \autoref{setup1}).
{\it A graded fractional ideal} of 
$R$ is a finitely generated graded
$R$-submodule $J \subseteq Q(R)$
that contains a {\it torsion-free element} $x \in J$ of $R$~(that is, $rx = 0$ implies $r = 0$ for all $r \in R$).
For a graded fractional ideal $J$ of $R$, we set
$J^{-1}=\{a \in Q(R) : aJ \subseteq R\}$.
\end{definition}

A useful formula for computing the trace of a fractional ideal is as follows:

\begin{lemma}\label{lemma:tracefractional}
Let $J \subseteq Q(R)$ be a graded fractional ideal.
Then we have $\tr_R(J)=J \cdot J^{-1}$.
\begin{proof}
Since $J$ is a graded fractional ideal,
$J$ is generated by $\left\{ \frac{b_1}{a_1}, \ldots, \frac{b_r}{a_r} \right\} \subseteq Q(R)$, where $a_i,b_i \in R$ and $a_1,\ldots,a_r$ is non-zero divisor of $R$.
Put $x=\prod_{i=1}^r a_i$.
Then $J$ is isomorphic to the graded ideal $xJ \subseteq R$ up to degree shift.
Since $xJ$ contains a homogeneous non-zero divisor of $R$,
we have
$\tr_R(J)=\tr_R(xJ)=(xJ) \cdot (xJ)^{-1}=J \cdot J^{-1}$ by \cite[Lemma 1.1]{herzog2019trace}.
\end{proof}
\end{lemma}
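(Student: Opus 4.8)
The statement claims that for a positively graded ring $R = \bigoplus_{i \ge 0} R_i$ and a graded fractional ideal $J \subset Q(R)$, we have $\tr_R(J) = J \cdot J^{-1}$, where $J^{-1} = \{a \in Q(R) : aJ \subset R\}$. The plan is to reduce this to the known ideal case \cite[Lemma 1.1]{herzog2019trace}, which states $\tr_R(I) = I \cdot I^{-1}$ for an ideal $I$ containing a nonzerodivisor, by clearing denominators.

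First I would use the hypothesis that $J$ is a finitely generated graded $R$-module to write a homogeneous generating set $\{b_1/a_1, \dots, b_r/a_r\} \subset Q(R)$ with $a_i, b_i \in R$ homogeneous and each $a_i$ a nonzerodivisor. Setting $x = \prod_{i=1}^r a_i$, multiplication by $x$ is an injective graded $R$-module map $J \to R$, so $xJ$ is an honest graded ideal of $R$ isomorphic to $J$ up to a degree shift; moreover $xJ$ contains the homogeneous nonzerodivisor $x \cdot (b_i/a_i) = b_i \prod_{j \ne i} a_j$ (assuming $J$ is nonzero, which is implicit since a fractional ideal contains a nonzerodivisor). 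Next, by \autoref{rem:traceisoloc}~(1) — invariance of the trace under graded isomorphism up to shift — we get $\tr_R(J) = \tr_R(xJ)$, and by \cite[Lemma 1.1]{herzog2019trace} applied to the ideal $xJ$ we get $\tr_R(xJ) = (xJ) \cdot (xJ)^{-1}$. Finally I would observe that $(xJ)^{-1} = x^{-1} J^{-1}$ inside $Q(R)$: indeed $a \in (xJ)^{-1}$ iff $a x J \subset R$ iff $ax \in J^{-1}$ iff $a \in x^{-1}J^{-1}$. Hence $(xJ)\cdot(xJ)^{-1} = (xJ)\cdot(x^{-1}J^{-1}) = J \cdot J^{-1}$, which completes the argument.

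I do not anticipate a genuine obstacle here; the only point requiring a little care is making the identification $(xJ)^{-1} = x^{-1} J^{-1}$ precise as fractional ideals (rather than as abstract modules) and confirming that the degree shift incurred by multiplication by $x$ does not affect the trace, which is exactly what \autoref{rem:traceisoloc}~(1) provides. One should also note that $\Hom^*_R$ versus $\Hom_R$ causes no issue because in the graded setting every homomorphism decomposes into graded pieces, so the graded trace coincides with the usual one for finitely generated graded modules — but since \cite[Lemma 1.1]{herzog2019trace} is already phrased in the same framework, this is inherited automatically.
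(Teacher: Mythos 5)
Your proposal follows exactly the same route as the paper: clear denominators with $x=\prod a_i$, pass to the graded ideal $xJ$, invoke \autoref{rem:traceisoloc}~(1) for invariance of the trace under graded isomorphism up to shift, and apply \cite[Lemma 1.1]{herzog2019trace}. The only difference is that you spell out the identity $(xJ)^{-1}=x^{-1}J^{-1}$, which the paper leaves implicit in the final chain of equalities.
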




\begin{definition}[{see \cite[Definition 2.2]{herzog2019trace}}]
We say that $R$ is $\textit{nearly Gorenstein}$ if $\tr_R(\omega_R) \supseteq {\mm_R}$.
\end{definition}

\begin{remark}
$R$ is Gorenstein if and only if $\tr_R(\omega_R) = R$.
Moreover,
$R$ is Gorenstein on the punctured spectrum $\Spec(R) \setminus \{ {\mm_R} \}$ if and only if $\sqrt{\tr_R(\omega_R)} \supseteq {\mm_R}$. Therefore, if $R$ is nearly Gorenstein,
then $R$ is Gorenstein on the punctured spectrum.
\begin{proof}
It follows from the graded version of \cite[Lemma 2.1]{herzog2019trace}.
\end{proof}
\end{remark}

\begin{definition}[{see \cite[Proposition 3.3.18]{bruns1998cohen}}]
Suppose that $R$ is Cohen--Macaulay.
We say that $R$ is {\it generically Gorenstein} if  $R_\pp$ is Gorenstein for all minimal prime ideals $\pp$.
We say that $R$ is {\it level} if
$\omega_R$ is generated in a single degree.
\end{definition}

In the remainder of this section, we collect several technical lemmas that will be used in the proofs of our main results.
They concern the structure of trace ideals and the behavior of graded fractional ideals.

\begin{lemma}\label{NZDLOVE}
Suppose that $R_0 = \kk$ is an infinite field. If $f, g \in R$ and $f$ is a non-zero divisor, then there exists $r \in R_0 \setminus \{0\}$ such that $f + rg$ is a non-zero divisor in $R$.
\begin{proof}
Assume the contrary: for every $r \in R_0 \setminus \{0\}$, $f + rg$ is a zero divisor. Since $R$ is a Noetherian ring, the set of zero divisors is the union of a finite number of associated prime ideals $X := \{\pp_1, \pp_2, \ldots, \pp_n\}$ of $R$ (see \cite[Theorem 6.1~(ii)]{matsumura1989commutative}). Therefore, for each $r \in R_0 \setminus \{0\}$, there exists a prime ideal $\pp \in X$ such that $f + rg \in \pp$. Since $R_0$ is an infinite field, we can choose $r_1, r_2 \in R_0 \setminus \{0\}$ and $\pp \in X$ such that $f + r_ig \in \pp$ for $i = 1,2$. Then $r_i^{-1} f + g \in \pp$ for $i = 1,2$. Subtracting these, we obtain $(r_1^{-1} - r_2^{-1}) f \in \pp$. Since $r_1^{-1} \neq r_2^{-1}$, it follows that $f \in \pp$. This contradicts the assumption that $f$ is a non-zero divisor.
\end{proof}
\end{lemma}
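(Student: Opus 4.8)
The plan is to run a prime-avoidance (genericity) argument over the infinite field $R_0 = \kk$. The key input is the standard fact that, since $R$ is Noetherian, the set of zero divisors of $R$ equals the union $\pp_1 \cup \cdots \cup \pp_n$ of its finitely many associated primes (\cite[Theorem 6.1~(ii)]{matsumura1989commutative}). So it suffices to produce $r \in R_0 \setminus \{0\}$ with $f + rg \notin \pp_i$ for every $i$.

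I would argue by contradiction. Suppose that for every $r \in R_0 \setminus \{0\}$ the element $f + rg$ is a zero divisor; then each such $f + rg$ lies in some $\pp_{i}$. Since $R_0$ is infinite while there are only $n$ associated primes, the pigeonhole principle yields a single index $i$ and two distinct scalars $r_1, r_2 \in R_0 \setminus \{0\}$ with $f + r_1 g \in \pp_i$ and $f + r_2 g \in \pp_i$. Now I take a $\kk$-linear combination that cancels $g$: subtracting gives $(r_1 - r_2)\,g \in \pp_i$, and since $r_1 - r_2$ is a nonzero element of the field $R_0 \subseteq R$ --- hence a unit of $R$ --- we get $g \in \pp_i$ and therefore $f = (f + r_1 g) - r_1 g \in \pp_i$. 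This contradicts the hypothesis that $f$ is a non-zero divisor, because $\pp_i$ consists of zero divisors. Hence some $f + rg$ must be a non-zero divisor. (The author normalizes the coefficient of $g$ to $1$ by dividing through by $r_1, r_2$ rather than subtracting directly, but this is the same computation.)

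I do not expect a serious obstacle here; the argument is short. The only points that need a little care are: (i) invoking that the associated primes genuinely exhaust the zero divisors, which is exactly where Noetherianity is used; and (ii) observing that nonzero elements of $R_0 = \kk$ are units in $R$, which legitimizes both the pigeonhole step and the linear manipulation over $\kk$. Homogeneity of $f$ and $g$ is in fact not needed for this proof --- it matters only because the lemma will later be applied to homogeneous elements --- although if one wishes one can dispose of the case $\deg f \neq \deg g$ immediately: there, since the associated primes of a graded Noetherian ring are graded, $f + rg \in \pp_i$ would force $f \in \pp_i$, which is already impossible, so any $r$ works.
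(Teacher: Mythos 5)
Your proof is correct and follows essentially the same argument as the paper: reduce to prime avoidance over the finite set of associated primes (which exhaust the zero divisors by Noetherianity), pigeonhole two distinct scalars into the same prime using that $\kk$ is infinite, and take a $\kk$-linear combination to force $f$ into that prime, a contradiction. The only cosmetic difference is the normalization in the cancellation step (you subtract directly and first extract $g$, the paper divides by $r_1,r_2$ and extracts $f$), which, as you note, is the same computation.
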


\begin{definition}
For a non-zero graded $R$-module $M$, we set $\indeg_R(M) := \min\{i \in \ZZ : M_i \ne 0\}$.
\end{definition}

\begin{lemma}\label{cool}
Let $J \subseteq Q(R)$ be a graded fractional ideal, and suppose that there exists a torsion-free element $f \in J_{\indeg_R(J)}$ of $R$.
\begin{itemize}
\item[\rm (1)] Then a homogeneous generating set of $J$ as an $R$-module can be chosen to include $f$, with all elements being $R$-regular.
\item[\rm (2)] Let $\left\{ \frac{a_1}{b_1}, \frac{a_2}{b_2}, \ldots, \frac{a_l}{b_l} \right\} \subseteq Q(R)$ be a homogeneous minimal generating set of $J$ as an $R$-module, where each $a_i,b_i \in R$ are homogeneous non-zero divisors for all $1 \le i \le l$. Set $G(J) := \left\{ \frac{a_1}{b_1}, \ldots, \frac{a_l}{b_l} \right\}$ and $G_{\min}(J) := \{ f \in G(J) : \deg f = \indeg_R(J) \}$. If $h \in [\tr_R(J)]_{\indeg_R(\tr_R(J))}$, then we can write $h = \sum_{i=1}^k f_i g_i$, where $f_i \in G_{\min}(J)$ and $g_i \in [J^{-1}]_{\indeg_R(\tr_R(J)) - \indeg_R(J)}$ for each $1 \le i \le k$.
\end{itemize}
\begin{proof}
(1):
We may assume that $R_0 = \kk$ is an infinite field.
It is easy to choose a homogeneous generating set of $J$ that includes $f$. Furthermore, we may assume $J$ is an ideal. Let $f_1 = f, f_2, \ldots, f_l$ be a homogeneous generating set of $J$. We proceed by induction on $l$. The case $l = 1$ is trivial. Assume the statement holds for $l = k$. We show it also holds for $l = k+1$. Applying the induction hypothesis to $J' := (f, f_2, \ldots, f_k)R$, we can assume $f_2, \ldots, f_k$ are non-zero divisors. Since $R$ is semi-standard graded, by \autoref{rem:buruburu}~(3), there exists a non-zero divisor $z \in R_1$. Since $\deg f_{k+1} \ge \deg f$, by \autoref{NZDLOVE}, there exists $r \in R_0 \setminus \{0\}$ such that $F := fz^{\deg f_{k+1} - \indeg_R(J)} + rf_{k+1}$ is a homogeneous non-zero divisor. Then $\{f, f_2, \ldots, f_k, F\}$ is a generating set of $J = J' + (f_{k+1})R$, consisting entirely of non-zero divisors.

(2): Write $G(J) = \{f_1, \ldots, f_l\}$ and $G_{\min}(J) = \{f_1, \ldots, f_k\}$ with $1 \le k \le l$. Since $h \in J \cdot J^{-1}$ by \autoref{lemma:tracefractional}, we can write $h = \sum_{i=1}^l f_i g_i$, where $g_i \in J^{-1}$. It suffices to show $g_i = 0$ for any $f_i \notin G_{\min}(J)$. Suppose $g_i \ne 0$ for some such $f_i$. Note that $\deg f_i g_i = \deg h = \indeg_R(\tr_R(J))$. Since $g_i \in J^{-1}$, we have $f_1 g_i \in R$. But $\deg f_1 g_i = \deg f_1 + \deg g_i < \deg f_i + \deg g_i = \deg h$, which is impossible unless $f_1 g_i = 0$. Since $g_i$ is $R$-regular, it follows that $f_1 = 0$, a contradiction.
\end{proof}
\end{lemma}

\subsection{Semi-standard graded rings and Veronese subalgebras}
In this subsection, we recall the definitions of semi-standard graded rings and Veronese subalgebras, and briefly discuss their interaction.
In particular,
we prove \autoref{cont}, which is a key ingredient in the proof of the main results of this paper.


\begin{definition}
Let $R=\bigoplus_{i \in \NN} R_i$ be a Noetherian graded $\kk$-algebra over a field $R_0=\kk$.
If $R =\kk[R_1]$, that is, $R$ is generated by $R_1$ as a $\kk$-algebra, then we say that $R$ is {\it standard graded}.  
If $R$ is finitely generated as a $\kk[R_1]$-module, then we say that $R$ is {\it semi-standard graded}.
\end{definition}

If $R$ is semi-standard graded, its Hilbert series $\Hilb(R,t) := \sum_{i=0}^\infty \dim_\kk(R_i)t^i$ is of the following form:
\[
\Hilb(R,t) = \frac{h_0 + h_1 t + \cdots + h_s t^s}{(1-t)^{\dim R}},
\]
where $h_i \in \mathbb{Z}$ for all $0 \le i \le s$, $h_s \ne 0$, and $\sum_{i=0}^s h_i \ne 0$.
The sequence $(h_0,h_1,\ldots,h_s)$ is called the \emph{$h$-vector} of $R$, with $h_0 = 1$.
If $R$ is Cohen--Macaulay, then $h_i \ge 0$ holds for all $i$ (see~\cite{stanley1991hilbert}); 
furthermore, if $R$ is standard graded, then $h_i > 0$ for all $i$ (see~\cite{stanleyhilbert}).
The integer $s$, called the \emph{socle degree} of $R$, is denoted by $s(R)$. 
For a Cohen--Macaulay ring $R$, it is known that the degree of $\Hilb(R,t)$ is equal to $a_R$.
The Ehrhart rings $A(P)$ of lattice polytopes $P$ form a typical class of semi-standard graded rings. 


\begin{definition}
Let \( k > 0 \) be a positive integer. For a positively graded ring \( R = \bigoplus_{i \in \NN} R_i \), the \textit{\( k \)-th Veronese subalgebra} is defined by
$R^{(k)} := \bigoplus_{i \in \NN} R_{ik}$.
Although this may sound tautological, we emphasize that the grading on \( R^{(k)} \) is given by \( [R^{(k)}]_i = R_{ik} \) for each \( i \in \NN \).  
Hence, \( R^{(k)} \) is standard graded if and only if \( R^{(k)} = \kk[R_k] \).  
Similarly, for a graded \( R \)-module \( M = \bigoplus_{i \in \ZZ} M_i \), we define its \textit{\( k \)-th Veronese submodule} by
$M^{(k)} := \bigoplus_{i \in \ZZ} M_{ik}$.
\end{definition}

\begin{remark}\label{rem:YES}
Suppose that $R$ is semi-standard graded.
Let \( J \) be a graded fractional ideal of \( R \), minimally generated as an \( R \)-module by \( \{ f_1, \dots, f_s \} \), and let \( \{ \theta_1, \dots, \theta_t \} \) be a minimal generating set of \( R \) as a \( \kk[R_1] \)-module.  
Then \( \{ f_i \theta_j : 1 \le i \le s,\, 1 \le j \le t \} \) generates \( J \) as a \( \kk[R_1] \)-module.
\end{remark}

\begin{definition}
Let $M$ be a graded $R$-module. Let $\rt_R(M)$ be the maximum degree of the minimal generating set of $M$.
\end{definition}

\begin{remark}\label{rem:buruburu}
Suppose that $R$ is semi-standard graded.
Let $M$ be a finitely generated graded $R$-module.
Then the following hold:
\begin{itemize}
\item[(1)] We have $R^{(n)} = \kk[R_n]$ for any $n \ge \rt_{\kk[R_1]}(R)$;
\item[(2)] We have $M_{n+k} = R_k M_n$ for all $n \ge \rt_{\kk[R_1]}(M)$ and $k \ge 0$;
\item[(3)]
If $\depth(R)> 0$,
then $R_1$ contains a non-zero divisor.
\end{itemize}
\end{remark}
\begin{proof}
(1) follows from \cite[Chapter III (*), Section 3, Lemma 2~(ii)]{bourbaki1998commutative}).

(2): For $M = R$, this is a special case of \cite[Chapter III (*), Section 3, Lemma 2~(i)]{bourbaki1998commutative}. For an arbitrary $R$-module $M$, we use induction on $k$. For $k = 1$, (2) follows from \cite[Chapter III (*), Section 3, Lemma 1]{bourbaki1998commutative}. Assume it holds for $k$ and prove it for $k + 1$. By the induction hypothesis and noting it holds for $M = R$, we have
$M_{n+(k+1)} = M_{(n+k)+1} = R_1 M_{n+k} = R_1 R_k M_n = R_{k+1} M_n$
Thus, it holds for $k + 1$.

(3):
Let $A = R \otimes_\kk \kk(x)$,
where \( \kk(x) \) denote the field of rational functions over \( \kk \) in one variable.
Notice that $1=\indeg_A(\mm_A)=\indeg_R(\mm_R)$.
By \cite[Proposition 1.5.12]{bruns1998cohen}, we can take a non-zero divisor $f = r \otimes 1_\kk \in A_1$, where $r \in R_1$. We can regard $R$ as a subalgebra of $A$ via the injection $\iota: R \rightarrow A$ defined by $\iota(x) = x \otimes 1_\kk$ for any $x \in R$. Thus, $r$ is a non-zero divisor in $R_1$ because $\iota(r) = f$ is a non-zero divisor.
\end{proof}

\subsection{Affine semigroup rings}
To present several important examples in this paper, we briefly recall some basic definitions of affine semigroup rings.
Throughout, we fix an integer $d > 0$ and a field $\kk$.

An \textit{affine semigroup} $S$ is a finitely generated subsemigroup of $\mathbb{Z}^d$. For $X \subseteq S$, we denote by $\langle X \rangle$ the subsemigroup of $S$ generated by $X$. We denote the group generated by $S$ by $\ZZ S$.
We say that $S$ is \emph{pointed} if $S \cap (-S) = \{0\}$.
Let $A=\kk[x_1, \ldots, x_d]$ denote the polynomial ring in $d$ variables.
For a vector $\bfa = (a_1, \ldots, a_d) \in \NN^d$, we often use the notation 
$\bfx^{\bfa} := \prod_{i=1}^d x_i^{a_i} \in A$.

\begin{definition}
Let $S$ be an affine semigroup. The \emph{affine semigroup ring} \( \kk[S] \) is defined as the \( \kk \)-vector space with basis \( \{ \bfx^\bfa \mid \bfa \in S \} \), endowed with multiplication $\bfx^\bfa \cdot \bfx^\bfb = \bfx^{\bfa + \bfb}$ for all $\bfa, \bfb \in S$.
\end{definition}

If $S$ is pointed, then $\kk[S]$ is isomorphic to a positively graded affine semigroup ring (see~\cite[Proposition~6.1.5]{bruns1998cohen}).
Moreover, every pointed affine semigroup $S$ has a unique finite minimal generating set (see~\cite[Proposition~7.15]{miller2005combinatorial}).
Let $S$ be a pointed affine semigroup, and let $G_S = \{ \bfa_1, \ldots, \bfa_s \} \subseteq \NN^d$ be its minimal generating set. Fix the affine semigroup ring $R = \kk[S]$. Recall that $R$ is naturally $\NN^d$-graded, and that every nonzero element $f \in R$ can be uniquely written as a sum of its homogeneous components $f_1, f_2, \ldots, f_n \in R$. We define the \emph{support} of $f$ by $\Supp(f) := \{ f_1, f_2, \ldots, f_n \}$.
A subset \(F \subseteq S\) is called a \emph{face} of \(S\) if, for all \(a,b \in S\),
$a+b \in F \Leftrightarrow a \in F \text{ and } b \in F$.
We define the \emph{dimension} of a face \( F \subseteq S \) by $\dim F := \operatorname{rank}_{\ZZ}(\ZZ F)$, where \( \ZZ F \) denotes the subgroup of \( \ZZ^d \) generated by \( F \). We set $E_S := \{ \bfa \in G_S : \dim(\NN \bfa) = 1 \}$.
With this terminology in place, we prove the following lemma.

\begin{lemma}\label{para}
Let $S \subseteq \ZZ^d$ be a pointed affine semigroup,
where
$d=\dim(R)$.
Let $R = \kk[S]$ be a Cohen--Macaulay semi-standard graded affine semigroup ring, and let $\theta_1, \ldots, \theta_d \in R_1$ be a linear system of parameters. Then for any $\bfx^\bfe$ with $\bfe \in E_S$, there exists some $\theta_i$ such that $\bfx^\bfe \in \Supp(\theta_i)$.
\end{lemma}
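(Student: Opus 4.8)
The plan is to pass to the $1$-dimensional quotient of $R$ cut out by the extremal ray through $\bfe$ and to exploit that a system of parameters of $R$ cannot all vanish in it. Concretely, I would fix $\bfe \in E_S$, so that $\RR_{\ge 0}\bfe$ is a $1$-dimensional face of the cone $\RR_{\ge 0}S$, and let $F := S \cap \RR_{\ge 0}\bfe$ be the corresponding face of $S$. Since $F$ is a face, $S \setminus F$ is a prime ideal of the semigroup $S$, so $\pp_F := (\bfx^{\ab} : \ab \in S \setminus F)R$ is a $\ZZ^d$-graded (hence $\NN$-graded) prime ideal of $R$ with $R/\pp_F \cong \kk[F]$; this quotient is a domain that is not a field (e.g.\ $\bfx^{\bfe}$ is a non-unit and not nilpotent in it), so $\dim R/\pp_F \ge 1$.

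The key step is to identify the degree-one component of $R/\pp_F$: it is spanned by the images of the monomials $\bfx^{\ab}$ with $\ab \in F$ and $\deg\ab = 1$. Since the degree function is additive on $S$, it restricts to a linear function on $\RR S \supseteq \RR\bfe$, and $\deg\bfe = 1$ by \autoref{extremal}; hence $\bfe$ is the \emph{unique} point of the ray $\RR_{\ge 0}\bfe$ of degree $1$. As any such $\ab$ lies in $F \subseteq \RR_{\ge 0}\bfe$, the only possibility is $\ab = \bfe$, so $(R/\pp_F)_1 = \kk\,\overline{\bfx^{\bfe}}$, a $1$-dimensional $\kk$-vector space (note that $\bfx^{\bfe}\notin\pp_F$).

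To finish, I would argue as follows. Because $\theta_1,\dots,\theta_d$ is a system of parameters with $d = \dim R$, the ring $R/(\theta_1,\dots,\theta_d)$ is Artinian, hence so is its quotient $(R/\pp_F)/(\bar\theta_1,\dots,\bar\theta_d) = R/(\pp_F + (\theta_1,\dots,\theta_d))$; since $\dim R/\pp_F \ge 1$ this forces $(\bar\theta_1,\dots,\bar\theta_d)\ne 0$, so $\bar\theta_i \ne 0$ for some $i$. As $\theta_i \in R_1$, its image lies in $(R/\pp_F)_1 = \kk\,\overline{\bfx^{\bfe}}$, so $\bar\theta_i = c\,\overline{\bfx^{\bfe}}$ with $0\ne c\in\kk$. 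On the other hand, writing $\theta_i = \sum_{\ab}c_{\ab}\bfx^{\ab}$ as its sum of $\ZZ^d$-homogeneous components, reduction modulo $\pp_F$ kills every term with $\ab\notin F$ and, by the previous paragraph, the only surviving term is the $\bfx^{\bfe}$-term; hence the coefficient $c_{\bfe}$ equals $c\ne 0$, i.e.\ $\bfx^{\bfe}\in\Supp(\theta_i)$.

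I expect the only slightly delicate part to be assembling the standard facts about affine semigroup rings correctly — that $S\setminus F$ is a prime semigroup ideal with $R/\pp_F\cong\kk[F]$ of dimension $\ge 1$ — together with the elementary but crucial linearity-of-degree observation pinning $\bfe$ down as the unique degree-$1$ lattice point on its ray (this is where \autoref{extremal} enters). No normality of $S$ is used, and \autoref{Mult} is not needed, though it could be invoked to note that $\pp_F$ and the other ideals in play are $\ZZ^d$-homogeneous.
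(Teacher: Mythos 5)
Your proof is correct, and it takes a genuinely different route from the paper's. The paper argues ``bottom-up'': it writes $R$ as a finite $\kk[\theta_1,\dots,\theta_d]$-module with $\ZZ^d$-homogeneous monomial generators $\bfx^{\bfa_i}$, expresses a high power $\bfx^{k\bfe}$ in those generators, compares $\ZZ^d$-degrees to extract a single monomial factorization $\bfx^{k\bfe}=\yb_1\cdots\yb_m\bfx^{\bfa_i}$ with each $\yb_j\in\Supp(\theta_{n_j})$, and then uses the face/primality structure of the extremal ray together with the degree-$1$ pinning of $\bfe$ to conclude $\yb_1=\bfx^{\bfe}$. You instead argue ``top-down'': you pass to the $1$-dimensional quotient $R/\pp_F\cong\kk[F]$ along the face $F=S\cap\RR_{\ge0}\bfe$, observe by linearity of the grading that $(R/\pp_F)_1=\kk\,\overline{\bfx^{\bfe}}$, and then let dimension theory do the work --- a full system of parameters must survive modulo the height-$(d-1)$ prime $\pp_F$, so some $\bar\theta_i\neq 0$, which by the one-dimensionality of the degree-$1$ piece forces $c_{\bfe}\neq 0$. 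Both proofs rest on the same two structural inputs (that $F$ is a face, hence $\pp_F$ is prime, and that $\deg$ is additive so $\bfe$ is the unique degree-$1$ point on its ray), but your arrangement replaces the paper's explicit monomial bookkeeping with a cleaner appeal to the s.o.p.\ dropping dimension to zero; it also sidesteps the paper's slightly misleading phrasing that ``$(\bfx^{\bfe})R$ is a prime ideal'' (which is not literally true in general --- what is prime is the face ideal $\pp_F$, exactly the object you work with). The trade-off is that the paper's version produces an explicit factorization witnessing the conclusion, whereas yours is purely existential; for the purpose of the lemma either is fine.
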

\begin{proof}
Put $B=\kk[\theta_1,\ldots,\theta_d]$.
Since $R$ is a finitely generated graded $B$-module, there exist homogeneous monomials
$\bfx^{\bfa_1},\ldots,\bfx^{\bfa_r}\in R$ such that
$R=\sum_{i=1}^r B\,\bfx^{\bfa_i}$.
Choose $k\in\ZZ_{>0}$ such that $k>\deg(\bfx^{\bfa_i})$ for all $i$.
Note that $\deg(\bfx^\bfe)=1$ (see~\cite[Proposition~3.4]{miyashita2023comparing}).
Then $\bfx^{k\bfe}\in R_k$, so we can write
\[
\bfx^{k\bfe}=\sum_{i=1}^r g_i(\theta_1,\ldots,\theta_d)\,\bfx^{\bfa_i},
\]
where each $g_i$ is homogeneous in $B$ of degree $k-\deg(\bfx^{\bfa_i})$.
Expand each $g_i$ as a $\kk$-linear combination of monomials in the $\theta$'s, and expand each
$\theta_j\in R_1$ as a $\kk$-linear combination of monomials in $\Supp(\theta_j)$.
Since the monomials in $R=\kk[S]$ form a $\kk$-basis, the monomial $\bfx^{k\bfe}$ must occur
with nonzero coefficient in this expansion.
Hence there exist an index $i$ and monomials $y_1,\ldots,y_m$ such that
$m=k-\deg(\bfx^{\bfa_i})>0$, $y_j\in \Supp(\theta_{n_j})\subset R_1$ for some $n_j\in\{1,\ldots,d\}$,
and $y_1\cdots y_m\,\bfx^{\bfa_i}=\bfx^{k\bfe}$. Let $y_j=\bfx^{\bfb_j}$
and
let $F:=\NN\bfe\subseteq S$.
The equality of monomials above means an equality in $S$:
\[
k\bfe=\bfa_i+\sum_{i=1}^m\bfb_i.
\]
Since $F=\NN\bfe$ is a face, the above equality forces $\bfb_j\in F$ for every $j$.
Thus, for each $j$, there exists $n_j\in\NN$ such that $\bfb_j=n_j\bfe$.
Since $y_j\in R_1$ and $\deg(\bfx^\bfe)=1$, we have $n_j=1$, i.e., $\bfb_j=\bfe$ for all $j$.
Hence $\bfx^\bfe = y_{j_0} \in \Supp(\theta_{n_{j_0}})$ for some $j_0$,
as desired.
\end{proof}

\section{Rings satisfying $(\natural)$ and their Veronese subalgebras}
\label{sect_B}
In this section, we introduce condition $(\natural)$ and examine its relationship with Veronese subalgebras.
Throughout this section, as in the previous one, and unless otherwise noted, we maintain \autoref{setup1}.
As mentioned in the introduction, we now restate the definition of condition~$(\natural)$ introduced in this work.

\begin{definition}
Suppose that $R$ is Cohen--Macaulay and semi-standard graded.  
We say that \emph{$R$ satisfies $(\natural)$} if  
$\sqrt{[\tr_R(\omega_R)]_1 R} \supseteq \mm_R$.
\end{definition}

It is immediate from the definition that if $R$ is nearly Gorenstein, then it satisfies~$(\natural)$.  
Furthermore, if $R$ satisfies~$(\natural)$, then it is Gorenstein on the punctured spectrum.  
We provide the following concrete example of a ring that satisfies condition~$(\natural)$ but fails to be nearly Gorenstein by a narrow margin.

\begin{example}\label{ex_nice}
Let $A = \QQ[x_1,x_2,x_3,x_4,x_5]$ be a polynomial ring and let
\[
I = (x_4^2 - x_3x_5,\ x_3x_4 - x_2x_5,\ x_3^2 - x_2x_4,\ x_1^2,\ x_2^2x_4,\ x_2^2x_3,\ x_2^3)A
\]
be an ideal of $A$, and set $R = A/I$.
Note that $R$ is standard graded, where the grading is $\deg x_i = 1$ for each $1 \le i \le 5$.
Then we can check that $R$ is a 1-dimensional Cohen--Macaulay ring with  
$\tr_R(\omega_R) = (x_2,x_3,x_4,x_5)R (=[\tr_R(\omega_R)]_1R)$  
by using \texttt{Macaulay2}~\cite{M2} and \cite[Corollary 3.2]{herzog2019trace}.  
Thus, $R$ is not nearly Gorenstein. However, $R$ satisfies $(\natural)$ because we can check that  
$\sqrt{[\tr_R(\omega_R)]_1 R} = \sqrt{\tr_R(\omega_R)} = \mm_R$.
\end{example}

Below, we point out several other important facts.

\begin{remark}\label{rem:PUNC}
Suppose that $R$ is Cohen--Macaulay.
Let $\KK'$ be an extension field of $R_0 = \kk$ and set $A = R \otimes_\kk \KK'$.
The following assertions hold; in (3) and (4), we assume that $R$ is semi-standard graded.
\begin{itemize}
\item[\rm (1)] $\tr_{A}(\omega_A) = \tr_R(\omega_R)A$;
\item[\rm (2)] $R$ is nearly Gorenstein if and only if $A$ is nearly Gorenstein;
\item[\rm (3)] If there exists a graded parameter ideal
$I = (\theta_1,\ldots,\theta_{\dim(R)})$
with $\theta_1,\ldots,\theta_{\dim(R)} \in R_1$
such that $\tr_R(\omega_R) \supseteq I$,
then $R$ satisfies $(\natural)$.  
If $\kk$ is an infinite field, the converse is also true;
\item[\rm (4)] If $R$ satisfies $(\natural)$, then so does $A$.
\end{itemize}
\begin{proof}
(1) follows from \cite[Exercises 3.3.31]{bruns1998cohen} and \cite[Lemma 1.5 (iii)]{herzog2019trace}.
(4) follows from
(1).

(2):
If $R$ is nearly Gorenstein, then so is $A$ by (1).  
If $A$ is nearly Gorenstein, then $\tr_R(\omega_R)A \supseteq \mm_R A$ by (1).  
Thus $\tr_R(\omega_R)A \cap R \supseteq \mm_R A \cap R$.  
Since $R \to A$ is faithfully flat, we have  
$\tr_R(\omega_R)A \cap R = \tr_R(\omega_R)$  
and  
$\mm_R A \cap R = \mm_R$  
by \cite[Theorem 7.5 (ii)]{matsumura1989commutative}.  
Therefore, $\tr_R(\omega_R) \supseteq \mm_R$, and hence $R$ is nearly Gorenstein.
(3):
The first part is clear. The second part follows from \cite[Proposition 1.5.12]{bruns1998cohen}.
\end{proof}
\end{remark}

From this point on, we prove the main results of this paper.
The following result applies not only to semi-standard graded rings but also to general Noetherian graded rings.


\begin{theorem}\label{useful}
Let $\fkn = (R_1)R$. Let $I$ be a graded ideal generated by a subset of $R_1$, and let $J$ be a graded fractional ideal of $R$. Suppose that there exists a homogeneous non-zero divisor $y \in R_1$.
If
$\tr_{R}(J) \supseteq I$,
then we have $\tr_{R ^{(k)}}(J ^{(k)}) \supseteq (\fkn^{k-1}I) ^{(k)}$ for any $k>0$.
\begin{proof}
Let $\{\theta_1,\ldots,\theta_s\} \subseteq R_1$ and $\{x_1,\ldots,x_t\}$ be minimal generating sets of $I$ and $\fkn$, respectively, as $R$-modules.
Throughout this proof, we set $\left( \prod_{\emptyset} x_{t_h} \right) := 1$ by convention.
Then we have
\[
\fkn^{k-1}I = \left( \left( \prod_{h=1}^{k-1} x_{t_h} \right) \theta_i \;:\; 1 \le t_1 \le \cdots \le t_{k-1} \le t,\ 1 \le i \le s \right) R,
\]
It follows that
\[
(\fkn^{k-1}I)^{(k)} = \left( \left( \prod_{h=1}^{k-1} x_{t_h} \right) \theta_i \;:\; 1 \le t_1 \le \cdots \le t_{k-1} \le t,\ 1 \le i \le s \right) R^{(k)}.
\]
Therefore, it suffices to show that
$\left( \prod_{h=1}^{k-1} x_{t_h} \right) \theta_i \in J^{(k)} \cdot (J^{(k)})^{-1} = \tr_{R^{(k)}}(J^{(k)})$
for all $1 \le i \le s$ and $1 \le t_1 \le \cdots \le t_{k-1} \le t$ (see \autoref{lemma:tracefractional}).
Let $G(J) = \{f_1, \ldots, f_u\} \subseteq Q(R)$ be a minimal generating set of $J$ as an $R$-module. Note that $\tr_R(J) = J \cdot J^{-1}$ by \autoref{lemma:tracefractional}. Hence, for each $1 \le i \le s$, since $\theta_i \in I \subseteq \tr_R(J)$, there exist homogeneous elements $g_{i,j} \in J^{-1}$ such that
$\theta_i = \sum_{j=1}^u f_j g_{i,j}$.

For each \( 1 \le j \le u \), either \( g_{i,j} = 0 \) or \( \deg g_{i,j} = 1 - \deg f_j \) holds, as \( \deg \theta_i = 1 \) by \autoref{cool}~(2).
If $g_{i,j} \neq 0$, then we may write
$g_{i,j} = \frac{b_{i,j}}{a_{i,j}}$,
where $a_{i,j}$ and $b_{i,j}$ are homogeneous elements of $R$, with $a_{i,j}$ a non-zero divisor in $R$, and
$\deg b_{i,j} = \deg a_{i,j} + 1 - \deg f_j$.
Furthermore, we can write $\deg f_j = k d_j - r_j$ for some integer $d_j > 0$ and $0 \le r_j < k$. Define
$l_{i,j} := \min \left\{ l \in \mathbb{N} \;\middle|\; l + \deg a_{i,j} \in k\mathbb{Z} \right\}$,
and set
$p_j := f_j \prod_{h=1}^{r_j} x_{t_h}$
and
$q_{i,j} := \frac{ y^{l_{i,j}} b_{i,j} \prod_{r_j < h \le k-1} x_{t_h} }{ y^{l_{i,j}} a_{i,j} }$.
Then we have $p_j \in J^{(k)}$ and $q_{i,j} \in (J^{(k)})^{-1}$.

Indeed, since $p_j \in J$ and
$\deg(p_j) = r_j + \deg f_j = k d_j$,
we obtain $p_j \in J^{(k)}$. Moreover, since
$\deg \left( y^{l_{i,j}} a_{i,j} \right) = l_{i,j} + \deg a_{i,j} \in k\mathbb{Z}$
and
$\deg \left( y^{l_{i,j}} b_{i,j} \prod_{r_j < h \le k-1} x_{t_h} \right) = (l_{i,j} + \deg a_{i,j}) + k(1 - d_j) \in k\mathbb{Z}$,
we have $q_{i,j} \in R^{(k)}$. Since $q_{i,j} \in J^{-1}$, it follows that $q_{i,j} \in (J^{(k)})^{-1}$.

Therefore, we have
\[
\left( \prod_{h=1}^{k-1} x_{t_h} \right) \theta_i
= \left( \prod_{h=1}^{k-1} x_{t_h} \right) \sum_{j=1}^u f_j g_{i,j}
= \sum_{j=1}^u
\left(
\left( \prod_{h=1}^{k-1} x_{t_h}\right) f_j  \cdot \frac{b_{i,j}}{a_{i,j}} \right)
= \sum_{j=1}^u p_jq_{i,j} \in J^{(k)} \cdot (J^{(k)})^{-1}.
\]
\end{proof}
\end{theorem}

\begin{proposition}\label{Yeah}
Suppose that $R$ is a Cohen--Macaulay semi-standard graded ring with $\dim(R) > 0$.
Let $I$ be a graded ${\mm_R}$-primary ideal 
generated by a subset of $R_1$ and put $\fkn=(R_1)R$.
If $\tr_R(\omega_R) \supseteq I$, then we have $\tr_{R ^{(k)}}(\omega_{R ^{(k)}}) \supseteq (\fkn^{k-1}I) ^{(k)}$ for any $k > 0$.
\begin{proof}
Notice that we can take a non-zero divisor of $y \in R_1$ by \autoref{rem:buruburu}~(3) because $R$ is semi-standard graded.
Since $\sqrt{\tr_{R}(\omega_{R})} \supseteq \mm_R$,
$R$ is
generically Gorenstein because $\dim(R)> 0$. Thus, $\omega_R$ is isomorphic to a graded fractional ideal of $R$. For any $k > 0$, since $\omega_{R ^{(k)}} \cong (\omega_{R}) ^{(k)}$ (see \cite[Corollary (3.1.3)]{goto1978graded}) and by \autoref{useful}, we have $\tr_{R ^{(k)}}(\omega_{R ^{(k)}}) \supseteq (\fkn^{k-1}I) ^{(k)}$.
\end{proof}
\end{proposition}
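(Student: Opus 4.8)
The plan is to reduce the assertion to \autoref{useful} applied to a fractional-ideal realization of $\omega_R$. Since \autoref{useful} takes as input a graded fractional ideal $J$ with $\tr_R(J)\supseteq I$ together with an ideal generated by a subset of $R_1$, the only point that genuinely needs an argument is that $\omega_R$ is isomorphic to such a $J$; by the standard theory this holds precisely when $R$ is generically Gorenstein, so the crux is to extract generic Gorensteinness from the hypothesis $\tr_R(\omega_R)\supseteq I$.

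First I would verify that $R$ is generically Gorenstein. If $\tr_R(\omega_R)=R$, then $R$ is Gorenstein and there is nothing to prove; otherwise, since $\tr_R(\omega_R)\supseteq I$ and $I$ is $\mm_R$-primary, we get $V(\tr_R(\omega_R))\subseteq V(I)=\{\mm_R\}$, i.e.\ $\sqrt{\tr_R(\omega_R)}=\mm_R$. As $\dim R>0$, no minimal prime $\pp$ of $R$ equals $\mm_R$, so $\tr_R(\omega_R)\not\subseteq\pp$, that is, $\tr_R(\omega_R)R_\pp=R_\pp$; by \autoref{rem:traceisoloc}~(2) this ideal is $\tr_{R_\pp}(\omega_{R_\pp})$, and the trace of a canonical module being the unit ideal forces $R_\pp$ to be Gorenstein (see \cite[Lemma~2.1]{herzog2019trace}). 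Hence $R$ is generically Gorenstein, so by the discussion recalled after \autoref{lemma:tracefractional} we may realize $\omega_R$ as a graded fractional ideal $J$ of $R$ with no degree shift; note $\tr_R(J)=\tr_R(\omega_R)\supseteq I$ by \autoref{rem:traceisoloc}~(1).

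It then remains to assemble the pieces. Applying \autoref{useful} to $J$ and $I$ gives $\tr_{R^{(k)}}(J^{(k)})\supseteq(\fkn^{k-1}I)^{(k)}$ for every $k>0$, and since $\omega_{R^{(k)}}\cong(\omega_R)^{(k)}\cong J^{(k)}$ by \cite[Corollary~(3.1.3)]{goto1978graded} while the trace is invariant under graded isomorphism (\autoref{rem:traceisoloc}~(1)), we conclude $\tr_{R^{(k)}}(\omega_{R^{(k)}})\supseteq(\fkn^{k-1}I)^{(k)}$, as wanted. The one step calling for an idea rather than bookkeeping is the passage to generic Gorensteinness; after that the proof is a formal consequence of \autoref{useful} together with the Veronese--canonical-module compatibility, the only care needed being to keep track of the hypothesis that $I$ is generated in degree one, which is exactly what licenses the use of \autoref{useful}.
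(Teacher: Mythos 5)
Your proof is correct and follows essentially the same route as the paper: both deduce generic Gorensteinness from the $\mm_R$-primary trace hypothesis and $\dim R>0$, realize $\omega_R$ as a graded fractional ideal, and then combine \autoref{useful} with the Goto--Watanabe isomorphism $\omega_{R^{(k)}}\cong(\omega_R)^{(k)}$. You merely spell out the localization-at-minimal-primes argument for generic Gorensteinness that the paper states without detail.
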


We now present one of the main results of this section.

\begin{theorem}\label{GREAT}
Suppose that $R$ is a Cohen--Macaulay semi-standard graded ring.
Then the following hold:
\begin{itemize}
\item[\rm (1)] If $R$ satisfies $(\natural)$,
then so does $R^{(k)}$ for any $k>0$;
\item[\rm (2)] Suppose that $R$ is standard graded and $\dim(R)> 0$.
If $R$ satisfies $(\natural)$, then $R ^{(k)}$ is nearly Gorenstein for any $k > a_{R/[\tr_R(\omega_R)]_1 R}$.
\end{itemize}
\begin{proof}
(1):
It is clear that $R$ satisfies $(\natural)$ if $\dim(R)=0$,
so we may assume that $\dim(R)>0$.
Put $\fkn=(R_1)R$.
Take any $k>0$.
By \autoref{Yeah},
we have
$\tr_{R ^{(k)}}(\omega_{R ^{(k)}}) \supseteq (\fkn^{k-1}I) ^{(k)}$,
where $I=[\tr_R(\omega_R)]_1R$.
Thus $\fkn^{k-1}I$ is an ${\mm_R}$-primary ideal since both of $I$ and $\fkn^{k-1}$ are ${\mm_R}$-primary ideal.
Thus $(\fkn^{k-1}I) ^{(k)}$ is an ${\mm} ^{(k)}$-primary ideal of $R ^{(k)}$ generated by a subset of $[R ^{(k)}]_1$,
so $R ^{(k)}$ satisfies $(\natural)$.

(2):
Put $I=[\tr_R(\omega_R)]_1R$.
Notice that
$a_{R/I}=s_{R/I}$ and $I \supseteq \mm_R^{s(R/I)+1}$.
Take any $k > s(R/I)$.
Since $R$ is standard graded and $I$ is a ${\mm_R}$-primary ideal generated by subset of $R_1$, we have
$$\tr_{R ^{(k)}}(\omega_{R ^{(k)}}) \supseteq (\mm_R^{k-1}I) ^{(k)}=I ^{(k)}
 \supseteq (\mm_R^{s(R/I)+1}) ^{(k)}={\mm} ^{(k)}$$
by \autoref{Yeah},
so $R ^{(k)}$ is nearly Gorenstein.
\end{proof}
\end{theorem}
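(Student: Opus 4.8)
The plan is to derive both parts as short consequences of \autoref{Yeah}, which already carries out the delicate fractional-ideal computation inside a Veronese subring. First I would dispose of the case $\dim R=0$: there $R$ and all of its Veronese subrings are Artinian with nilpotent maximal ideal, so $\sqrt{J}\supset\mm$ holds for \emph{every} ideal $J$ and property $(\natural)$ is automatic; hence I may assume $\dim R>0$. Put $I:=\tr_R(\omega_R)_1R$. Since $I\subset\mm_R$ while $(\natural)$ gives $\sqrt{I}\supset\mm_R$, the ideal $I$ is $\mm_R$-primary, it is generated by a subset of $R_1$, and trivially $\tr_R(\omega_R)\supset I$ — precisely the input \autoref{Yeah} needs.

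For part~(1) I would feed this $I$ and $\fkn=(R_1)R$ into \autoref{Yeah}, obtaining $\tr_{R^{(k)}}(\omega_{R^{(k)}})\supset(\fkn^{k-1}I)^{(k)}$ for every $k>0$. Since $\fkn^{k-1}$ and $I$ are both $\mm_R$-primary, so is $\fkn^{k-1}I$; and $\fkn^{k-1}I$ is generated by products of $k$ elements of $R_1$, i.e.\ in degree $k$, so in the $k$-th Veronese it becomes an $\mm_{R^{(k)}}$-primary ideal $(\fkn^{k-1}I)^{(k)}$ of $R^{(k)}$ generated in degree $1$. Hence $(\fkn^{k-1}I)^{(k)}\subset\tr_{R^{(k)}}(\omega_{R^{(k)}})_1R^{(k)}$, and passing to radicals gives $\sqrt{\tr_{R^{(k)}}(\omega_{R^{(k)}})_1R^{(k)}}\supset\mm_{R^{(k)}}$; that is, $R^{(k)}$ satisfies $(\natural)$.

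For part~(2), $R$ is now standard graded, so $\fkn=\mm_R$ and $R^{(k)}$ is again standard graded. The key point I would establish is the identity $(\mm_R^{k-1}I)^{(k)}=I^{(k)}$: the inclusion $\subset$ is clear, and for $\supset$ one writes a homogeneous element of $I_{ik}$ with $i\ge1$ as $\sum_j r_ja_j$ with $a_j\in I_1$ and $r_j\in R_{ik-1}$ (possible since $I$ is generated in degree $1$), then factors $R_{ik-1}=R_{k-1}\cdot R_{(i-1)k}$ and uses $R_{k-1}=(R_1)^{k-1}\subset\mm_R^{k-1}$. Next, $R/I$ is Artinian with $a(R/I)=s(R/I)$ and $\mm_R^{\,s(R/I)+1}\subset I$; and for $k>s(R/I)$ one has $ik\ge k\ge s(R/I)+1$ for every $i\ge1$, so $(\mm_R^{\,s(R/I)+1})^{(k)}=\mm_{R^{(k)}}$. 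Chaining these with \autoref{Yeah},
\[
\tr_{R^{(k)}}(\omega_{R^{(k)}})\supset(\mm_R^{k-1}I)^{(k)}=I^{(k)}\supset(\mm_R^{\,s(R/I)+1})^{(k)}=\mm_{R^{(k)}},
\]
so $R^{(k)}$ is nearly Gorenstein.

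The only genuine obstacle, I expect, is \autoref{Yeah} itself together with the more technical \autoref{useful} on which it rests, where one must explicitly construct the homogeneous elements of $(J^{(k)})^{-1}$ realizing $(\fkn^{k-1}I)^{(k)}$ inside $J^{(k)}\cdot(J^{(k)})^{-1}$; granting that, the remaining work is bookkeeping with Veronese degrees, the one nontrivial point being the identity $(\mm_R^{k-1}I)^{(k)}=I^{(k)}$. That identity is exactly where standard — rather than merely semi-standard — gradedness is used: in the semi-standard case $\fkn$ can be a proper subideal of $\mm_R$, so $\fkn^{k-1}I$ need not reach all of $\mm_R$ for any $k$, which is why part~(2) assumes standard grading and why its threshold is stated via $s(R/\tr_R(\omega_R)_1R)$.
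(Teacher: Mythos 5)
Your proposal is correct and follows essentially the same route as the paper: reduce to $\dim R>0$, apply \autoref{Yeah} to the $\mm_R$-primary ideal $I=\tr_R(\omega_R)_1R$ with $\fkn=(R_1)R$, then observe in (1) that $(\fkn^{k-1}I)^{(k)}$ is an $\mm_{R^{(k)}}$-primary ideal generated in degree one, and in (2) chain $(\mm_R^{k-1}I)^{(k)}=I^{(k)}\supset(\mm_R^{s(R/I)+1})^{(k)}=\mm_{R^{(k)}}$. You merely spell out a couple of steps the paper leaves implicit, notably the verification that $(\mm_R^{k-1}I)^{(k)}=I^{(k)}$ via $R_{ik-1}=R_{k-1}R_{(i-1)k}$.
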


\autoref{GREAT}~(2) is a refinement of \cite[Corollary~4.7]{herzog2019trace}.  
Indeed, the following generalization of \cite[Corollary~4.7]{herzog2019trace} follows from \autoref{GREAT}~(2).

\begin{corollary}\label{cor:GREAT}
Suppose that $R$ is a Cohen--Macaulay standard graded ring with $\dim(R) > 0$.
If $R$ is nearly Gorenstein, then so is $R ^{(k)}$ for any $k > 0$.
\begin{proof}
It follows from \autoref{GREAT}~(2)
because $a_{R/[\tr_R(\omega_R)]_1R}=a_\kk=0$.
\end{proof}
\end{corollary}

\begin{example}\label{ex_good}
Let us consider \autoref{ex_nice} again.
Recall that $R$ is not nearly Gorenstein, but does satisfy $(\natural)$.
Moreover,
we can check that $[\tr_R(\omega_R)]_1R \supseteq \mm^2$
by using $\mathtt{Macaulay2}$ ({\cite{M2}}),
so we have $a_{R/[\tr_R(\omega_R)]_1R}=1$.
Thus
$R ^{(k)}$ is nearly Gorenstein for any $k>1$
by \autoref{GREAT}~(2).
\end{example}

\begin{remark}\label{rem:vero}
There are some remarks about \autoref{GREAT}.
\begin{itemize}
\item[\rm (1)]
\autoref{GREAT}~(2) does not hold in general if we replace condition~$(\natural)$ with the property of being Gorenstein on the punctured spectrum.  
Indeed, there exists a Cohen--Macaulay standard graded affine semigroup ring $A$ that is Gorenstein on the punctured spectrum, but such that $A^{(k)}$ is not nearly Gorenstein for any $k > 0$.
\item[\rm (2)]
\autoref{GREAT}~(2) does not hold in general for semi-standard graded rings.  
Indeed, there exists a nearly Gorenstein semi-standard graded affine semigroup ring $B$ such that $B^{(2)}$ is not nearly Gorenstein.
\end{itemize}
\end{remark}
\begin{proof}
(1): Set \( S = \langle (0,0,1),(2,2,3),(4,2,3),(3,3,4),(4,3,4) \rangle \), and let \( A = \QQ[S] \).
Then \( A \) is a standard graded affine semigroup ring, where
$\deg u =
\deg s^2t^2u^3 =
\deg s^4t^2u^3 =
\deg s^3t^3u^4 =
\deg s^4t^3u^4 = 1$,
since each generator of \( S \) lies on the affine hyperplane defined by \( y = z - 1 \), which does not contain the origin.
Using \texttt{Macaulay2}~\cite{M2}, one can verify that \( A \) is a 3-dimensional Cohen--Macaulay standard graded affine semigroup ring with \( a_A = 0 \) and \( h_{s(A)} = 1 \).
Moreover, by \cite[Corollary~3.2]{herzog2019trace} and \texttt{Macaulay2}, we obtain
$\tr_A(\omega_A) = (u, s^2t^2u^3, s^4t^2u^3, (s^3t^3u^4)^2, (s^4t^3u^4)^2)A$.
Thus, \( A \) is not Gorenstein but is Gorenstein on the punctured spectrum, since \( \sqrt{\tr_A(\omega_A)} = \mm_A \).

By \cite[Corollary~3.1.3 and Theorem~3.2.1]{goto1978graded}, for any \( k > 0 \), the Veronese subalgebra \( A^{(k)} \) is a non-Gorenstein Cohen--Macaulay standard graded affine semigroup ring with \( h_{s(A^{(k)})} = 1 \). Therefore, by \cite[Corollary~3.12~(1)]{miyashita2025pseudo}, \( A^{(k)} \) is not nearly Gorenstein.

(2):
Set \( S' = \langle (0,1), (3,1), (6,1), (9,1), (1,2), (4,2) \rangle \), and let \( B = \QQ[S'] \).
Then \( B \) is a nearly Gorenstein semi-standard graded affine semigroup ring with grading given by
\[
\deg t = \deg s^3t = \deg s^6t = \deg s^9t = 1, \quad \deg st^2 = \deg s^4t^2 = 2
\]
(see \cite[Example~3.8~(a)]{miyashita2023comparing}).
However, the Veronese subalgebra
$B^{(2)} \cong \QQ[t, st, s^3t, s^4t, s^6t, s^9t, s^{12}t, s^{15}t, s^{18}t]$
is not nearly Gorenstein.
Indeed, by \cite[Corollary~3.2]{herzog2019trace} and \texttt{Macaulay2}~\cite{M2}, we have
\[
\tr_{B^{(2)}}(\omega_{B^{(2)}}) = (t, s^3t, s^6t, s^9t, s^{12}t, s^{15}t, s^{18}t)B^{(2)} + (st, s^4t)^2 B^{(2)}.
\]
Hence \( B^{(2)} \) is not nearly Gorenstein, as \( st \notin \tr_{B^{(2)}}(\omega_{B^{(2)}}) \).
\end{proof}

\begin{remark}
Suppose that $R$ is a Cohen--Macaulay semi-standard graded ring with $\dim(R) > 0$.
If $R$ satisfies $(\natural)$, then we can find an integer $K>0$ such that $R ^{(K)}$ is nearly Gorenstein.
\begin{proof}
Note that there exists $r > 0$ such that $R ^{(r)} = \kk[R_{r}]$ (e.g., $r = \rt_{\kk[R_1]}(R)$). Since $R ^{(r)}$ is standard graded, by \autoref{GREAT}~(2), $R ^{(kr)} = (R ^{(r)}) ^{(k)}$ is nearly Gorenstein for any $k > a_{R^{(r)}/\tr_{R ^{(r)}}(\omega_{R ^{(r)}})}$. Therefore, if we take $K = (a_{R ^{(r)}/\tr_{R ^{(r)}}(\omega_{R ^{(r)}})} + 1)r$, then $R ^{(K)}$ is nearly Gorenstein.
\end{proof}
\end{remark}

\begin{question}\label{Q:tion}
Suppose that $R$ is a Cohen--Macaulay semi-standard graded ring with $\dim(R) > 0$.
If $R$ satisfies $(\natural)$,
then can we find an integer $K>0$ such that $R^{(k)}$ nearly Gorenstein for any $k \ge K$?
\end{question}

In the remainder of this section, we prove that \autoref{Q:tion} holds when \(R\) is either a domain or a level ring.
The following is needed to resolve \autoref{Q:tion}; its proof is somewhat intricate but important.

\begin{proposition}\label{cont}
Suppose that $R$ is semi-standard graded.
Let \( J \) be a graded fractional ideal of \( R \), and let
$K \ge \max \left\{1, \;|\indeg_R(J)|, \;\rt_{\kk[R_1]}(J), \;\rt_{\kk[R_1]}(R)\right\}$
be an integer.
Assume that $J_{\indeg_R(J)}$ contains a torsion-free element of $R$.
If \( \tr_{R^{(K)}}(J^{(K)}) \supseteq \MM_{R^{(K)}} \), then we have
$\tr_{R^{(i)}}(J^{(i)}) \supseteq \MM_{R^{(i)}}$ for all \( i \ge K \).
\end{proposition}
\begin{proof}
We proceed by induction on \( i \ge K \). The case \( i = K \) holds by assumption.  
suppose that \( \tr_{R^{(i)}}(J^{(i)}) \supseteq \MM_{R^{(i)}} \) for some \( i \ge K \).  
Since \( i \ge \rt_{\kk[R_1]}(R) \), both \( R^{(i)} \) and \( R^{(i+1)} \) are standard graded.
Moreover, by \autoref{cool}~(1), we may assume that all generators of \( J \) are \( R \)-regular elements.
To prove the claim, it suffices to show that any \( y \in R_{i+1}\) belongs to \( \tr_{R^{(i+1)}}(J^{(i+1)}) \).
Since $i \ge \rt_{\kk[R_1]}(R)$ and \autoref{rem:buruburu}~(2),
we have
\( R_{i+1} = R_1 R_i \).
Thus
we can write \(y=\sum_{j} x_j y_j'\) for some \(x_j\in R_1\) and \(y_j'\in R_i\).
Since \(\operatorname{tr}_{R^{(i+1)}}(J^{(i+1)})\) is an ideal, to prove
\(y\in \operatorname{tr}_{R^{(i+1)}}(J^{(i+1)})\) it is enough to show that
\(x_jy_j'\in \operatorname{tr}_{R^{(i+1)}}(J^{(i+1)})\) for all \(j\).
Fix \(j\). For notational simplicity, set \(x:=x_j \in R_1\) and \(y':=y_j' \in R_i\), and it remains to show that
\(xy'\in \operatorname{tr}_{R^{(i+1)}}(J^{(i+1)})\).

Since \(R\) is semi-standard graded, note that \(J_j \neq 0\) for all \(j \ge \indeg_R(J)\) by \autoref{rem:buruburu}~(3) and the assumption that \(J_{\indeg_R(J)} \neq 0\).
Moreover, since \(i \ge |\indeg_R(J)|\), we have \(\indeg_{R^{(i)}}(J^{(i)})=1\) if \(\indeg_R(J)\ge 0\), and \(\indeg_{R^{(i)}}(J^{(i)})=0\) if \(\indeg_R(J)<0\).
From here on, we show that \(xy' \in \tr_{R^{(i+1)}}(J^{(i+1)})\) by a case analysis depending on the values of \(\indeg_R(J)\) and \(\rt_{\kk[R_1]}(J)\).

\begin{itemize}
\item[\rm (1)] Assume that \(\indeg_R(J)\ge 0\).
In this case, for \(\alpha\in\{i,i+1\}\) we have \(J^{(\alpha)}=R^{(\alpha)}J_\alpha\).
Indeed, fix \(n\ge 1\). Since \(n\alpha \ge \indeg_{R^{(\alpha)}}(J^{(\alpha)})=\alpha\), we have \((n-1)\alpha \ge 0\), and since \(\alpha \ge \rt_{\kk[R_1]}(J)\), applying \autoref{rem:buruburu}~(2) yields
$J_{n\alpha}=R_{(n-1)\alpha}J_\alpha=[R^{(\alpha)}]_{n-1}J_\alpha$.
Hence \(J^{(\alpha)}=R^{(\alpha)}J_\alpha\).
We further divide into two subcases:
\begin{itemize}
  \item[(a)] If \(\tr_{R^{(i)}}(J^{(i)})=\MM_{R^{(i)}}\bigl(=R^{(i)}\,[\MM_{R^{(i)}}]_1\bigr)\), then
$\indeg_{R^{(i)}}\!\bigl(\tr_{R^{(i)}}(J^{(i)})\bigr)=1$.
Hence
$y' \in R_i = [R^{(i)}]_1 \subseteq [\MM_{R^{(i)}}]_1
= \bigl[\tr_{R^{(i)}}(J^{(i)})\bigr]_1$.
Therefore, by \autoref{lemma:tracefractional} and \autoref{cool}~(2), we can write
$y'=\sum_{k=1}^m f_k g_k$
with $f_k\in [J^{(i)}]_1$ and $g_k\in \bigl[(J^{(i)})^{-1}\bigr]_0$.
  We claim that \( g_k \in (J^{(i+1)})^{-1} \)
  (equivalently, \( g_k J_{i+1} \subseteq R^{(i+1)}\) because $J^{(i+1)}=R^{(i+1)}J_{i+1}$)
  for each \( k \). To verify this, note that it suffices to show \( g_k J_{i+1} \subseteq R\) because $g_k J_{i+1} \subseteq J_{i+1}$.
Take any \(a\in J_{i+1}\).
Since \(J_{i+1}=R_1J_i\) by \autoref{rem:buruburu}~(2), we can write
\( a = \sum_{l} r_l b_l \) with \( r_l \in R_1 \) and \( b_l \in J_i \). Then we have
  $g_k a =
  \sum_{l} r_l (g_k b_l) \in R^{(i+1)},$
  so \( g_k J_{i+1} \subseteq R \), as claimed.
  Consequently, $xy'=\sum_{k} (xf_k) g_k \in J^{(i+1)} \cdot (J^{(i+1)})^{-1}$,
  as desired.
  \item[(b)] If \( \tr_{R^{(i)}}(J^{(i)}) = R^{(i)} \), then \( \indeg_{R^{(i)}}(\tr_{R^{(i)}}(J^{(i)})) = 0 \), and we can write
  $1 = \sum_{h} f_h g_h$ with $f_h \in [J^{(i)}]_1,\ g_h \in [(J^{(i)})^{-1}]_{-1}$ by \autoref{lemma:tracefractional} and \autoref{cool}~(2).
  Thus we have
  $x f_h \in J^{(i+1)}$ and $y'g_h\in (J^{(i+1)})^{-1}$
  and so $xy'\in \tr_{R^{(i+1)}}(J^{(i+1)})$, as desired.
\end{itemize}
\item[\rm (2)]
Assume that \(\indeg_R(J)<0\).
We distinguish two cases according to the value of \(\rt_{\kk[R_1]}(J)\).

\smallskip

\noindent
\textbf{Case 1:} \(\rt_{\kk[R_1]}(J) \ge 0\).
In this case, for \(\alpha\in\{i,i+1\}\) we have
$J^{(\alpha)} = R^{(\alpha)}J_0 + R^{(\alpha)}J_\alpha$.

Indeed, fix \(n\ge 1\). (Note that \(\alpha>0\) since \(\alpha \ge K \ge |\indeg_R(J)|>0\).)
Then \((n-1)\alpha \ge 0\), and since \(\alpha \ge K \ge \rt_{\kk[R_1]}(J)\),
applying \autoref{rem:buruburu}~(2) yields
$J_{n\alpha}=R_{(n-1)\alpha}J_\alpha=[R^{(\alpha)}]_{n-1}J_\alpha$.
Hence \(\bigoplus_{n>0} J_{n\alpha}=R^{(\alpha)}J_\alpha\), and therefore
$J^{(\alpha)} = J_0 \oplus \bigoplus_{n>0} J_{n\alpha}
= R^{(\alpha)}J_0 + R^{(\alpha)}J_\alpha$.

\textbf{Case 2:} \(\rt_{\kk[R_1]}(J)<0\).
In this case, for \(\alpha\in\{i,i+1\}\) we have \(J^{(\alpha)}=R^{(\alpha)}J_0\).
Indeed, for any \(n\ge 0\) we have \(n\alpha \ge 0\), and since \(0 \ge \rt_{\kk[R_1]}(J)\),
applying \autoref{rem:buruburu}~(2) yields
$J_{n\alpha}=R_{n\alpha}J_0=[R^{(\alpha)}]_n J_0$.
Hence \(J^{(\alpha)}=R^{(\alpha)}J_0\).

In particular, note that in both Case~1 and Case~2 we have
\(\indeg_{R^{(i)}}(J^{(i)})=\indeg_{R^{(i+1)}}(J^{(i+1)})=0\).
Finally, assume that \(\rt_{\kk[R_1]}(J)\ge 0\) and consider the following two cases.
\begin{itemize}
  \item[(c)] If \( \tr_{R^{(i)}}(J^{(i)}) = \MM_{R^{(i)}} \), then we have \( y' \in \mm_{R^{(i)}} = J^{(i)} \cdot (J^{(i)})^{-1} \) and we can write
  $y' = \sum_{k} f_k g_k$ with $f_k \in [J^{(i)}]_0(=[J^{(i+1)}]_0=J_0)$ and $\ g_k \in [(J^{(i)})^{-1}]_1$ by \autoref{cool}~(2).
  Then
  we obtain \( (x g_k)a \in R^{(i+1)} \) for any \( a \in J_0 \bigoplus J_i \) because \( g_k a \in R \).
  Thus we have \( x g_k \in (J^{(i+1)})^{-1} \) for each \( k \) because $J^{(i+1)} = R^{(i+1)}J_0 + R^{(i+1)}J_{i+1}$
  and so
  $xy' = \sum_{k} f_k (xg_k) \in J^{(i+1)} \cdot (J^{(i+1)})^{-1}$.
  \item[(d)] If \( \tr_{R^{(i)}}(J^{(i)}) = {R^{(i)}} \), then
  we have
  $1 = \sum_{h} f_h g_h$ with $f_h \in [J^{(i)}]_0=[J^{(i+1)}]_0$ and $g_h \in [(J^{(i)})^{-1}]_0$ by \autoref{cool}~(2).
In this case, by definition we have \(x f_h \in J_1 = [J^{(i+1)}]_0 \subseteq J^{(i+1)}\).
Moreover, since \(g_h y' \in R_0\), we have \(g_h y' \in (J^{(i+1)})^{-1}\).
Therefore,
we have $xy'=\sum_h (x f_h)(g_h y') \in J^{(i+1)}\cdot (J^{(i+1)})^{-1}$,
as desired.
\end{itemize}
If \(\rt_{\kk[R_1]}(J)<0\), then the same argument as in \emph{(c)} and \emph{(d)} shows that
\(xy' \in J^{(i+1)}\cdot (J^{(i+1)})^{-1}\).
\end{itemize}
In all cases, we conclude that
$xy' \in J^{(i+1)}\cdot (J^{(i+1)})^{-1}$,
as desired.
\end{proof}

We will discuss the following setup:



\begin{setup}\label{atode}
Suppose that $R$ is a Cohen--Macaulay and generically Gorenstein semi-standard graded ring with $\dim(R)>0$.
Moreover, assume that $[\omega_R]_{-a_R}$ contains an $R$-regular element of $R$.
\end{setup}

This setup deals with cases that are more general than domains, as the following shows.

\begin{remark}\label{rem:PLEASECHECKBHCHAPTER4}
Suppose that $R$ is a Cohen--Macaulay semi-standard graded ring with $\dim(R)>0$.
If $R$ is a domain, or a generically Gorenstein and level ring, then there exists a homogeneous $R$-regular element in $[\omega_R]_{-a_R}$.
\begin{proof}
Note that every domain is generically Gorenstein, so the statement is straightforward when $R$ is a domain. Moreover, if $R$ is generically Gorenstein and level, then the statement follows because the proof for standard graded rings in \cite[Theorem 4.4.9]{bruns1998cohen} can be extended to semi-standard graded rings.
\end{proof}
\end{remark}

\begin{remark}
Assume \autoref{atode}.
Since \( \omega_R \) can be identified with a graded fractional ideal \( J_R \), we will regard it as such inside \( Q(R) \).
As \( R \) admits a homogeneous non-zero divisor of degree one, we may assume that \( \operatorname{indeg}(J_R) = \operatorname{indeg}(\omega_R) = -a_R \).
Furthermore, we may assume that \( [J_R]_{-a_R} \) contains an \( R \)-regular element.
\end{remark}

\begin{theorem}\label{prop:trace_stability}
Assume \autoref{atode}.
If \( R^{(K)} \) is nearly Gorenstein for some
$$K \ge \max\left\{ 1, \;|a_R|,\; \rt_{\kk[R_1]}(\omega_R),\; \rt_{\kk[R_1]}(R) \right\},$$ then so is \( R^{(i)} \) for all \( i \ge K \).
\end{theorem}
\begin{proof}
We may assume $\kk$ is infinite by \autoref{rem:PUNC}~(2).
Moreover, we may assume that $\omega_R$ is isomorphic to a fractional ideal $J$.
Set $A=R^{(i)}$.
Note that \(\omega_A \cong J^{(i)}\) by \cite[Corollary~(3.1.3)]{goto1978graded}, and that $|a_R|=|\indeg_R(\omega_R)|$ by definition.
The claim then follows from \autoref{cont}.
\end{proof}

\begin{corollary}\label{nicenice}
Assume \autoref{atode} and $R$ is standard graded.  
If \( R^{(K)} \) is nearly Gorenstein for some \( K \ge \rt_{R}(\omega_R) \), then so is \( R^{(i)} \) for all \( i \ge K \).
\end{corollary}

\begin{theorem}\label{thm:SUPERNICE}
Assume \autoref{atode}.
If $R$ satisfies condition~$(\natural)$, then there exists a positive integer $k_R$ such that $R^{(k)}$ is nearly Gorenstein for all $k \ge k_R$.
More specifically, set
$b = \rt_{\kk[R_1]}(R)$,
$J = [\tr_{R^{(b)}}(\omega_{R^{(b)}})]_1 R^{(b)}$.
Then $k_R = bj_R$ satisfies the desired condition,
where $j_R$ is the smallest integer $j$ satisfying
$$bj \ge \max \left\{ 1,\; |a_R|\;,\rt_{\kk[R_1]}(\omega_R),\; \left(1 + a_{R^{(b)} / J} \right)b \right\}.$$
\begin{proof}
Notice that $R^{(b)}$ satisfies $(\natural)$ by \autoref{GREAT}~(1).
Since $j_R > a_{R^{(b)}/J}$,
we have $R ^{(k_R)}=(R ^{(b)})^{(j_R)}$ is nearly Gorenstein by \autoref{GREAT}~(2).
Thus, since $k_R \ge \max \{
1,\; |a_R|,\;
\rt_{\kk[R_1]}{(\omega_R)},\; b\}$ and $R ^{(k_R)}$ is nearly Gorenstein,
we have $R ^{(k)}$ is nearly Gorenstein for any $k \ge k_R$ by \autoref{prop:trace_stability}.
\end{proof}
\end{theorem}

With the above preparations in place, we will now prove the final main result of this section.

\begin{corollary}
\label{cor:SUPERNICE}
Assume that 
$R$ is a Cohen--Macaulay semi-standard graded ring,
and $R$ is either a domain or a level ring.
If \( R \) satisfies $(\natural)$,
then there exists an integer \( k_R > 0 \) such that \( R^{(k)} \) is nearly Gorenstein for all \( k \ge k_R \).
\end{corollary}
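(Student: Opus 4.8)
The plan is to deduce this from \autoref{thm:SUPERNICE}, so the work reduces to verifying that the hypotheses of \autoref{setup:domainor} are satisfied. I would first dispose of the degenerate case $\dim R = 0$: then $R$ is Artinian, its Hilbert series is a polynomial of degree $s(R)$, so $R_j = 0$ for $j > s(R)$, and for every $k > s(R)$ one has $R^{(k)} = R_0 = \kk$, which is Gorenstein and hence nearly Gorenstein; thus $k_R = s(R)+1$ works in that case.

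So assume $\dim R > 0$. The key observation is that property $(\natural)$ itself supplies the ``generically Gorenstein'' part of \autoref{setup:domainor}: from $\sqrt{\tr_R(\omega_R)_1 R} \supset \mm_R$ we obtain $\sqrt{\tr_R(\omega_R)} \supset \mm_R$, so by \autoref{rem:natural}~(1) (equivalently \autoref{rem:PUNC}~(2)) $R$ is Gorenstein on the punctured spectrum $\Spec(R) \setminus \{\mm_R\}$. Since $\dim R > 0$, every minimal prime of $R$ is distinct from $\mm_R$, so $R_\pp$ is Gorenstein at each minimal prime $\pp$; that is, $R$ is generically Gorenstein.

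Now I would combine this with the hypothesis that $R$ is a domain or a level ring. A domain is automatically generically Gorenstein, and a level ring is generically Gorenstein by the previous paragraph. In either case $R$ is ``a domain, or generically Gorenstein and level'', which is precisely the hypothesis of \autoref{rem:PLEASECHECKBHCHAPTER4}; hence $R$ satisfies \autoref{setup:domainor}. Applying \autoref{thm:SUPERNICE} then yields $k_R \in \ZZ_{>0}$ with $R^{(k)}$ nearly Gorenstein for all $k \ge k_R$, which is the assertion.

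The substantive content — the inductive step of \autoref{cont} and the degree bookkeeping in \autoref{thm:SUPERNICE} that actually produce $k_R$ — is already in hand, so the only obstacle here is recognizing that property $(\natural)$ is exactly what rescues the level (non-domain) case from the generically-Gorenstein gap in \autoref{setup:domainor}, together with not overlooking the trivial case $\dim R = 0$.
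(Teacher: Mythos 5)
Your proof is correct and follows essentially the same route as the paper: dispose of $\dim R=0$, then observe that $(\natural)$ forces $R$ to be generically Gorenstein when $\dim R>0$, invoke \autoref{rem:PLEASECHECKBHCHAPTER4} to land in \autoref{setup:domainor}, and finish with \autoref{thm:SUPERNICE}. One small note in your favor: in the degenerate case you correctly observe $R^{(k)}=R_0=\kk$ for $k>s(R)$ (a Gorenstein field), whereas the paper writes $R^{(k)}=0$, which is a minor slip since the degree-zero piece survives.
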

\begin{proof}
We note that the zero ring is conventionally regarded as Gorenstein.
When $\dim(R)= 0$, we see that $R^{(k)} = 0$ for any $k > a_R$, so it suffices to prove the assertion for $\dim(R)> 0$.
Then $R$ is generically Gorenstein since $R$ satisfies $(\natural)$ with $\dim(R)>0$.
Therefore, since $R$ satisfies the condition of \autoref{atode} by \autoref{rem:PLEASECHECKBHCHAPTER4}, the statement follows from \autoref{thm:SUPERNICE}.
\end{proof}

\begin{example}\label{ex.thmC}
Let \( S = \langle (0,1), (3,1), (6,1), (9,1), (2,10) \rangle \subseteq \mathbb{N}^2 \), and consider the semigroup ring \( R = \mathbb{Q}[S] \). This defines a semi-standard graded affine semigroup ring with grading given by  
\[
\deg(t) = \deg(s^3t) = \deg(s^6t) = \deg(s^9t) = 1, \quad \deg(s^2t^{10}) = 10.
\]
Note that \( \mathrm{rt}_{\mathbb{Q}[R_1]}(R) = 10 \).
A computation using \texttt{Macaulay2}~\cite{M2} verifies that \( R \) is a level ring satisfying \((\natural)\). Moreover, we have
$\rt_{\QQ[R_1]}(\omega_R)=-9$
and $a_{A} = 2$,
where $A=R^{(10)} \big/ \mathrm{tr}_{R^{(10)}}(\omega_{R^{(10)}})$.
It then follows from \autoref{thm:SUPERNICE} that \( R^{(k)} \) is nearly Gorenstein for all \( k \geq  30\).
\end{example}

\section{Characterization of {$(\natural)$} in Affine Semigroup Rings and Ehrhart Rings}\label{sect_D}

In this section, we study how condition~$(\natural)$ manifests in affine semigroup rings and Ehrhart rings.
In particular, we prove that the property $(\natural)$ introduced in this paper agrees, for Ehrhart rings, with the property considered in~\cite{hall2023nearly}, and we give an alternative proof of \cite[Theorem~20]{hall2023nearly}.
First, we characterize condition $(\natural)$ for affine semigroup rings.

\begin{proposition}\label{prop:NICE}
Let $S \subseteq \ZZ^d$ be a pointed affine semigroup and let $R = \kk[S]$ be a Cohen--Macaulay semi-standard graded affine semigroup ring. Then $R$ satisfies $(\natural)$ if and only if $\tr_R(\omega_R) \supseteq (\bfx^{\bfe} : \bfe \in E_S)R$.
\begin{proof}
If $\tr_R(\omega_R) \supseteq (\bfx^{\bfe} : \bfe \in E_S)R$, then $R$ satisfies $(\natural)$ because $(\bfx^{\bfe} : \bfe \in E_S)R$ is an $\mm_R$-primary ideal generated by degree-one elements by \cite[Proposition 3.4]{miyashita2023comparing}.

Conversely, suppose $R$ satisfies $(\natural)$. First, consider the case where $\kk$ is infinite. Then there exists a linear system of parameters $I = (\theta_1, \ldots, \theta_{\dim R})$ with $\theta_i \in R_1$ such that $\tr_R(\omega_R) \supseteq I$ by \autoref{rem:PUNC}~(3). Since $\omega_R$ is $\ZZ^d$-graded, it follows from \autoref{Mult} that $\tr_R(\omega_R)$ is also $\ZZ^d$-graded. Hence, every $\ZZ^d$-homogeneous element of $I$ lies in $\tr_R(\omega_R)$, and by \autoref{para}, we obtain $\tr_R(\omega_R) \supseteq (\bfx^{\bfe} : \bfe \in E_S)R$.

Now suppose $\kk$ is arbitrary. Let $\KK' = \kk(x)$ be an infinite field extension and set $A = R \otimes_\kk \KK'$. Then $A$ satisfies $(\natural)$ by \autoref{rem:PUNC}~(4), so $\tr_{A}(\omega_{A}) \supseteq (\bfx^{\bfe} : \bfe \in E_S)A$. By \autoref{rem:PUNC}~(1), $\tr_{A}(\omega_{A}) = \tr_R(\omega_R)A$. Since both ideals are $\ZZ^d$-graded by \autoref{Mult}, it follows that $\tr_R(\omega_R) \supseteq (\bfx^{\bfe} : \bfe \in E_S)R$.
\end{proof}
\end{proposition}

Next, after recalling the basic definitions of Ehrhart rings, we characterize condition $(\natural)$ in terms of the property considered in~\cite{hall2023nearly}.
We recall some definitions about Ehrhart rings arising from lattice polytope.
As in the previous subsection,
we fix an integer $d > 0$ and a field $\kk$.
We denote the natural pairing between an element $n \in (\RR^d)^*$ and an element $x \in \RR^d$ by $n(x)$.
Throughout this subsection, let $P \subseteq \RR^d$ be a lattice polytope, $\facets(P)$ the set of facets of $P$, and $\verto(P)$ the set of vertices of $P$.
Moreover, recall that we always assume $P$ is full-dimensional and has the facet presentation
\[
P = \setcond{x \in \RR^d}{n_F(x) \ge -h_F \text{ for all $F \in \facets(P)$}},
\]
where each height $h_F$ is an integer and each inner normal vector $n_F \in (\ZZ^d)^*$ is a \emph{primitive} lattice point, i.e., a lattice point such that the greatest common divisor of its coordinates is $1$.

Let $C_P$ be the \textit{cone over $P$}, that is,
\[
C_P = \RR_{\ge 0}(P \times \set{1}) = \setcond{(x,k) \in \RR^{d+1}}{n_F(x) \ge -k h_F \text{ for all $F \in \facets(P)$}}.
\]

\begin{definition}\label{def:Ehrhartring}
The \textit{Ehrhart ring} of $P$ is defined as
$A(P)
= \kk[\tb^x s^k : k \in \NN \text{ and } x \in kP \cap \ZZ^d]$,
where $\tb^x = t_1^{x_1} \cdots t_d^{x_d}$ and $x = (x_1, \ldots, x_d) \in kP \cap \ZZ^d$.
\end{definition}

Notice that the Ehrhart ring of $P$ is a normal affine semigroup ring, and hence it is Cohen--Macaulay.
Moreover, we can regard $A(P)$ as an positively graded ring by setting $\deg(\tb^x s^k) = k$ for each $x \in kP \cap \ZZ^d$.
It is known that $A(P)$ is semi-standard graded.

We also define another affine semigroup ring, the \textit{toric ring} of $P$, as
$\kk[P] = \kk[\tb^x s : x \in P \cap \ZZ^d]$.
The toric ring of $P$ is a standard ring by setting
$\deg(\tb^x s) = 1$ for each $x \in P \cap \ZZ^d$.
It is known that $\kk[P] = A(P)$ if and only if $P$ has the integer decomposition property.
Here, we say that $P$ has the \emph{integer decomposition property} (i.e., $P$ is \emph{IDP}) if for all positive integers $k$ and all $x \in kP \cap \ZZ^d$, there exist $y_1, \ldots, y_k \in P \cap \ZZ^d$ such that $x = y_1 + \cdots + y_k$.

In order to describe the canonical module and the anti-canonical module of $A(P)$ in terms of $P$, we prepare some notation.
For a polytope or cone $\sigma$, we denote the strict interior of $\sigma$ by $\into(\sigma)$.
Notice that
\[
    \into(C_P) = \setcond{(x,k) \in \RR^{d+1}}{n_F(x) > -k h_F \text{ for all $F \in \facets(P)$}}.
\]
Moreover, we define
\[
\ant(C_P) := \setcond{(x,k) \in \RR^{d+1}}{ n_F(x) \ge -k h_F - 1 \text{ for all $F \in \facets(P)$}}.
\]

\begin{proposition}[see {\cite[Proposition 4.1 and Corollary 4.2]{herzog2019measuring}}]\label{prop:can_antican}
The canonical module of $A(P)$ and the anti-canonical module of $A(P)$ are given respectively by
\[
\omega_{A(P)} = \left( \tb^x s^k : (x,k) \in \into(C_P) \cap \ZZ^{d+1} \right) A(P),\quad
\omega_{A(P)}^{-1} = \left( \tb^x s^k : (x,k) \in \ant(C_P) \cap \ZZ^{d+1} \right) A(P).
\]
Furthermore, the negated $a$-invariant of $A(P)$ coincides with the codegree of $P$, i.e.,
\[
a_{A(P)} = -\min\setcond{k \in \ZZ_{\ge 1}}{\strint(kP) \cap \ZZ^d \neq \varnothing}.
\]
\end{proposition}

Let $A$ and $B$ be subsets of $\RR^d$.
Their \emph{Minkowski sum} is defined as
$A + B := \setcond{x + y}{x \in A,\, y \in B}$.
We recall that the \emph{(direct) product} of two polytopes $P \subseteq \RR^d$ and $Q \subseteq \RR^e$ is denoted by $P \times Q \subseteq \RR^{d+e}$.
Let $P$ and $Q$ be two lattice polytopes.
It is known that $\kk[P \times Q]$ is isomorphic to the Segre product $\kk[P] \natural \kk[Q]$.

For a subset $X$ of $\RR^{d+1}$ and $k \in \ZZ$, we define the $k$-th \emph{piece} of $X$ by
\[
X_k := \{x \in \RR^d : (x,k) \in X\}.
\]
Moreover, for a lattice polytope $P$, we denote its \emph{codegree} by $a_P$ (see \autoref{prop:can_antican}).

\begin{definition}[{see \cite[Definition 15]{hall2023nearly}}]\label{def:floor_rem}
Let $P \subseteq \RR^d$ be a lattice polytope with codegree $a_P$.
We set
\[
\lfloor P\rfloor := \conv(\strint(C_P)_{a_P} \cap \ZZ^d), \qquad
\set{P} := \conv(\ant(C_P)_{1 - a_P} \cap \ZZ^d).
\]
\end{definition}

\begin{remark}
In \cite[Definition 15]{hall2023nearly}, for a lattice polytope \( P \subseteq \mathbb{R}^d \), the floor polytope \( \lfloor P \rfloor \) is defined as
\[
\lfloor P \rfloor := \operatorname{conv}(\ant(C_P)_1 \cap \mathbb{Z}^d),
\]
and is referred to as the \emph{floor polytope}.
This definition differs from the one given in \autoref{def:floor_rem}. For simplicity of notation, we adopt the definition in \autoref{def:floor_rem} throughout this paper.
\end{remark}

The following is a restatement of results from \cite{hall2023nearly} using our notation $\lfloor P\rfloor$.

\begin{remark}[{\cite[Lemma 16]{hall2023nearly}}]\label{lem:flo_rem}
Let $P \subseteq \RR^d$ be a lattice polytope with codegree $a_P$.
Then:
\begin{enumerate}
    \item $\lfloor P\rfloor \subseteq \setcond{x \in \RR^d}{n_F(x) \ge 1 - a_P h_F \text{ for all } F \in \facets(P)}$;
    \item $\set{P} \subseteq \setcond{x \in \RR^d}{n_F(x) \ge (a_P - 1) h_F - 1 \text{ for all } F \in \facets(P)}$;
\end{enumerate}
\end{remark}

Let \( P \subseteq \mathbb{R}^d \) be a lattice polytope, and let \( C_P \) be the cone over \( P \).  
Let \( \verto(P) \) denote the set of vertices of \( P \).  
Moreover, let \( V_P \subseteq C_P \) denote the union of all extremal rays of \( C_P \), that is,
\[
V_P = \bigcup_{v \in \verto(P)} \mathbb{R}_{\ge 0}(v,1) \subseteq \mathbb{R}^{d+1}.
\]
Note that \( (V_P)_1 = \verto(P) \).

\begin{lemma}\label{prop:nearly}
Let \( P \subseteq \RR^d \) be a lattice polytope with codegree \( a_P \), and set \( S = C_P \cap \ZZ^{d+1} \). Then, the inclusion
$\tr_{A(P)}(\omega_{A(P)}) \supseteq (\xb^{\eb} : \eb \in E_S)A(P)$
holds if and only if the following inclusion holds:
\begin{equation}\label{eq:nG_cones}
(V_P \cap \ZZ^{d+1}) \setminus \{0\} \subseteq \strint(C_P) \cap \ZZ^{d+1} + \ant(C_P) \cap \ZZ^{d+1}.
\end{equation}
In particular, if the above inclusion holds, then we have
\[
\verto(P) \subseteq \strint(C_P)_{a_P} \cap \ZZ^d + \ant(C_P)_{1 - a_P} \cap \ZZ^d.
\]
\end{lemma}
\begin{proof}
This can be proved in essentially the same way as \cite[Proposition~14]{hall2023nearly}.
\end{proof}

\begin{proposition}\label{prop:dec}
Let $P \subseteq \RR^d$ be a lattice polytope and set $S = C_P \cap \ZZ^{d+1}$. Then $\tr_{A(P)}(\omega_{A(P)}) \supseteq (\xb^{\eb} : \eb \in E_S)A(P)$ if and only if $P = \lfloor P \rfloor + \{P\}$.
\end{proposition}
\begin{proof}
Notice that the inclusion $P \supseteq \lfloor P \rfloor + \{P\}$ always holds. Indeed, for $x \in \lfloor P \rfloor$ and $y \in \{P\}$, \autoref{lem:flo_rem} implies $n_F(x + y) \ge -h_F$ for all $F \in \facets(P)$, hence $x + y \in P$.

Now suppose $\tr_{A(P)}(\omega_{A(P)}) \supseteq (\xb^{\eb} : \eb \in E_S)A(P)$. Since $\lfloor P \rfloor + \{P\}$ is a convex polytope, it suffices to show that $\verto(P) \subseteq \lfloor P \rfloor + \{P\}$. Let $v \in \verto(P)$. Then we can write $v = x + y$ with $x \in \lfloor P \rfloor \cap \ZZ^d$ and $y \in \{P\} \cap \ZZ^d$ by \autoref{prop:nearly}. Hence $P \subseteq \lfloor P \rfloor + \{P\}$, and equality follows.

Conversely, assume $P = \lfloor P \rfloor + \{P\}$. Note that $(\xb^{\eb} : \eb \in E_S)A(P)$ is generated by monomials $\tb^x s$ with $x \in \verto(P)$. Let $x \in \verto(P)$, then $x = x_1 + x_2$ with $x_1 \in \lfloor P \rfloor \cap \ZZ^d$ and $x_2 \in \{P\} \cap \ZZ^d$. Then $\tb^{x_1}s^{a_P} \in \omega_{A(P)}$ and $\tb^{x_2}s^{1 - a_P} \in \omega_{A(P)}^{-1}$, so
$\tb^x s = (\tb^{x_1}s^{a_P})(\tb^{x_2}s^{1 - a_P}) \in \omega_{A(P)} \cdot \omega_{A(P)}^{-1}$.
\end{proof}

\begin{corollary}\label{thm:goodnews}
Let $P \subseteq \RR^d$ be a lattice polytope. Then $A(P)$ satisfies $(\natural)$ if and only if $P = \lfloor P \rfloor + \{P\}$.
\begin{proof}
It follows from \autoref{prop:NICE} and \autoref{prop:dec}.
\end{proof}
\end{corollary}

\begin{corollary}[{\cite[Theorem 20]{hall2023nearly}}]
Let $P \subseteq \RR^d$ be a lattice polytope satisfying $P = \lfloor P\rfloor + \{P\}$. Then there exists some integer $K > 0$ such that $A(kP)$ is nearly Gorenstein for any $k \ge K$.
\begin{proof}
Since $A(kP) \cong A(P) ^{(k)}$ for any $k \in \ZZ_{>0}$, it follows from \autoref{cor:SUPERNICE} and \autoref{thm:goodnews}.
\end{proof}
\end{corollary}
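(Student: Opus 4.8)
The plan is to read this corollary as the specialization to Ehrhart rings of the ring-theoretic machinery already developed, so the proof is essentially a chain of invocations. The first and only structural ingredient beyond those results is the identification $A(kP) \cong A(P)^{(k)}$ for every $k \in \ZZ_{>0}$: the cone over $kP$ coincides, as a subset of $\RR^{d+1}$, with the cone over $P$, and replacing $P$ by $kP$ simply rescales the grading by a factor of $k$, so that the degree-$m$ component of $A(kP)$ is the degree-$mk$ component of $A(P)$. This is standard in the theory of Ehrhart rings, and it reduces the statement to exhibiting a threshold $K$ past which every Veronese subring $A(P)^{(k)}$ is nearly Gorenstein.

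Next I would check that $A(P)$ meets the hypotheses of \autoref{cor:SUPERNICE}. Being the Ehrhart ring of a lattice polytope, $A(P)$ is a normal affine semigroup ring, hence Cohen--Macaulay, and it is an integral domain and semi-standard graded; thus the ``domain'' branch of \autoref{cor:SUPERNICE} is available as soon as we know that $A(P)$ satisfies property $(\natural)$. (The degenerate case $\dim A(P) = 0$, when $P$ is a point, is handled inside \autoref{cor:SUPERNICE} and is in any case trivial.)

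Property $(\natural)$ is precisely where the hypothesis $P = [P] + \{P\}$ is used: taking $S = C_P \cap \ZZ^{d+1}$, \autoref{thm:Affine}~(2) says that $P = [P] + \{P\}$ is equivalent to $\sqrt{\tr_{A(P)}(\omega_{A(P)})_1 A(P)} \supset \mm_{A(P)}$, i.e.\ to $A(P)$ satisfying $(\natural)$. Feeding this into \autoref{cor:SUPERNICE} produces an integer $k_R \in \ZZ_{>0}$ such that $A(P)^{(k)}$ is nearly Gorenstein for all $k \ge k_R$; setting $K = k_R$ and applying the isomorphism $A(kP) \cong A(P)^{(k)}$ finishes the proof.

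Since each step is a direct citation of an earlier result, there is no genuine obstacle here; the only point deserving a moment's care is the isomorphism $A(kP) \cong A(P)^{(k)}$ together with its compatibility with the gradings, which is what lets ``nearly Gorenstein'' transfer back from $A(P)^{(k)}$ to $A(kP)$ — but this is routine and well known.
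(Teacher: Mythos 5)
Your proposal is correct and follows exactly the same route as the paper's proof: identify $A(kP)\cong A(P)^{(k)}$, translate the Minkowski-sum hypothesis $P=[P]+\{P\}$ into property $(\natural)$ via \autoref{thm:Affine}~(2), and then invoke \autoref{cor:SUPERNICE} in its ``domain'' branch. The only difference is that you spell out the routine verifications (Cohen--Macaulayness and integrality of $A(P)$, the trivial zero-dimensional case, and compatibility of gradings) that the paper leaves implicit.
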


\section*{Acknowledgments}
I would like to thank my supervisor, Akihiro Higashitani, for helpful discussions.
I also thank Shinya Kumashiro, Koji Matsushita, and Kohji Yanagawa for their valuable comments.
In particular, I am indebted to Shinya Kumashiro for explaining to me the essential part of the proof of \autoref{NZDLOVE}.
Finally, I would like to thank the anonymous referees for their helpful suggestions that improved this paper.

\end{document}